\documentclass{amsart}

\usepackage{macros}
\standardmargins\standardpackages\standardrenews
\standardlabeling\standardmakes
\colorcommentstrue
\usepackage[utf8]{inputenc}
 \usepackage{amssymb}
\let\comjohannes\undefined

\draftfalse

\newcommand{\Bad}{\mathbf{Bad}}
\newcommand{\eps}{\varepsilon}
\newcommand{\normZ}[1]{\left\|#1\right\|_{\mathbb Z}}

\title{Well and badly approximable sets, and rapid winning}
\authormumtaz
\authordavid

\begin{document}
\maketitle

\begin{abstract}
The set of $\tau$-approximable numbers, $\mathcal W(\tau)$, has genuinely fractional Hausdorff dimension, whereas the set of inhomogeneously badly approximable
numbers, $\Bad^\gamma$,  has full Hausdorff dimension. We determine the Hausdorff dimension of their intersection by introducing the $\Psi$-rapid game, a scale-sensitive refinement of the rapid game of Hatefi and Simmons (preprint 2024). For every approximation function $\psi$, we prove that $\mathcal W(\psi)\cap\Bad^\gamma$ is strong $\Psi$-rapid winning for a natural gauge $\Psi$ determined by $\psi$. Unlike Schmidt-type games, whose winning property always implies full Hausdorff dimension, the $\Psi$-rapid game is calibrated to a prescribed Diophantine scale, so that the resulting dimension bound depends explicitly on the decay of $\Psi$. In particular, for $\psi(q)=q^{-\tau}, \tau\ge1,$ we recover the exact Jarn\'ik--Besicovitch dimension, that is, 
$$
\HD\bigl(\mathcal W(\tau)\cap\Bad^\gamma\bigr)=\frac{2}{\tau+1}.$$
\end{abstract}

\section{Introduction and statement of results}

Let $\psi:\mathbb{N}\to(0,\infty)$ be a function such that $\psi(n)\to 0$ as $n\to\infty$. The
well-known set of $\psi$-approximable numbers is defined as
\[
\WW(\psi) = \left\{x\in\mathbb{R}: \normZ{qx}<\psi(q)
\text{ for infinitely many } q\in\mathbb{N}\right\}.
\]
Here and throughout, $\normZ{\cdot}$ denotes the distance to the nearest integer. The beginning point of the metrical theory of this set is the following classical result.

\begin{theorem}[Khintchine, 1924]
\label{thm:khintchine}
Let $\psi:\mathbb N\to(0,\infty)$ be a non-increasing function. Then
\[
\lambda(\WW(\psi)\cap [0,1))
=
\begin{cases}
0 & \text{if } \sum_{q=1}^\infty \psi(q)<\infty,\\
1 & \text{if } \sum_{q=1}^\infty \psi(q)=\infty,
\end{cases}
\]
where $\lambda$ denotes the Lebesgue measure.
\end{theorem}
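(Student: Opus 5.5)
The plan is to prove the two halves separately: the convergence case by a Borel--Cantelli covering argument, and the divergence case by a quasi-independence estimate followed by a zero--one law.

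\emph{Convergence case.} For $q\in\mathbb N$ set
\[
A_q=\{x\in[0,1):\normZ{qx}<\psi(q)\}.
\]
This set is a union of at most $q+1$ intervals centred at the points $p/q$, each of length at most $2\psi(q)/q$. Hence $\lambda(A_q)\le 2\psi(q)+O(\psi(q)/q)\ll\psi(q)$. Since $\WW(\psi)\cap[0,1)=\limsup_q A_q$, summability of $\psi$ together with the first Borel--Cantelli lemma gives measure $0$. Monotonicity is not needed here.

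\emph{Divergence case.} I would use the standard reduction to the divergence Borel--Cantelli lemma in its Kochen--Stone / Erd\H{o}s--R\'enyi form. If $\sum\lambda(E_q)=\infty$ and
\[
\sum_{q,r\le N}\lambda(E_q\cap E_r)\le C\Bigl(\sum_{q\le N}\lambda(E_q)\Bigr)^2
\]
for infinitely many $N$, then $\lambda(\limsup E_q)\ge 1/C$.

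Overlaps between $A_q$ and $A_{kq}$ spoil this estimate, so I would first replace $A_q$ by the primitive sets
\[
E_q=\{x:|qx-p|<\psi(q)\ \text{for some } p \text{ with } \gcd(p,q)=1\}.
\]
Then $\lambda(E_q)\asymp\varphi(q)\psi(q)/q$. Monotonicity of $\psi$ is used at this step. It lets us pass to the divergent subseries along dyadic blocks $2^k\le q<2^{k+1}$: $\sum\psi(q)=\infty$ implies $\sum_k 2^k\psi(2^k)=\infty$. Within each block the average of $\varphi(q)/q$ is bounded below, so $\sum\lambda(E_q)=\infty$.

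For the overlap estimate with $q\ne r$, I would count pairs of reduced fractions $p/q$ and $p'/r$ with $|p/q-p'/r|<\psi(q)/q+\psi(r)/r$. The difference $pr-p'q$ is a nonzero multiple of $\gcd(q,r)$. Counting the integers $m=pr-p'q$ in the allowed range then gives
\[
\lambda(E_q\cap E_r)\ll\lambda(E_q)\lambda(E_r)\frac{qr}{\varphi(q)\varphi(r)}+\text{(small terms)}.
\]
The main obstacle is to control the factor $qr/(\varphi(q)\varphi(r))$ on average. My approach is to restrict $q$ to the dyadic-block structure and use that $\sum_{q\le N}q/\varphi(q)\ll N$, so this factor is bounded on average. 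That yields quasi-independence on average with an absolute constant $C$, and hence $\lambda(\WW(\psi))>0$.

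\emph{Zero--one law.} To upgrade positive measure to full measure, I would invoke the Cassels/Gallagher zero--one law: $\lambda(\WW(\psi)\cap[0,1))\in\{0,1\}$ for any $\psi$. A simpler alternative for monotone $\psi$ is a Lebesgue density argument. The set $\limsup E_q$ is essentially invariant under $x\mapsto x+a/b$ up to rescaling $\psi$ by constants. The same quasi-independence bound holds locally on every interval, with the constant $C$ uniform. A set with uniformly positive relative density on all intervals has full measure. This gives $\lambda=1$ in the divergence case.
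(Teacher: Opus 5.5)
\textbf{There is no paper proof to compare against.} The paper does not prove this statement: Khintchine's theorem is quoted in the introduction purely as classical background, and none of its machinery (the $\Psi$-rapid game, the Dani correspondence, the grid estimates) bears on it. Judged on its own, your outline is the standard modern proof: first Borel--Cantelli for convergence; for divergence, coprime sets $E_q$, overlap counting, the Erd\H{o}s--R\'enyi/Kochen--Stone lemma and a zero--one law. It is correct provided you take the last step from Gallagher's zero--one law applied to $\limsup E_q$, or from Cassels' zero--one law applied to $\mathcal W(\psi)\supseteq\limsup E_q$.

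Two tidying points.
\begin{itemize}
\item Replace $\psi$ by $\min\{\psi,1/2\}$, so that the intervals around distinct reduced fractions $p/q$ are disjoint. You need this for $\lambda(E_q)\gg\varphi(q)\psi(q)/q$, and it costs nothing.
\item Your counting already gives a clean bound with no ``small terms''. Put $d=\gcd(q,r)$ and $\Delta=r\psi(q)+q\psi(r)$. There are at most $2\Delta/d$ admissible $m$, and each yields at most $d+1$ pairs $(p,p')$. Hence $\lambda(E_q\cap E_r)\le 16\,\psi(q)\psi(r)$ for $q\ne r$. Your ``main obstacle'' is then exactly the inequality $\sum_{q\le N}\psi(q)\ll\sum_{q\le N}\varphi(q)\psi(q)/q$. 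This follows by partial summation from monotonicity and $\sum_{q\le n}\varphi(q)/q\gg n$ for all $n$, which is the same fact you already used to get $\sum\lambda(E_q)=\infty$. The bound $\sum_{q\le N}q/\varphi(q)\ll N$ is relevant only because it implies that lower bound via Cauchy--Schwarz.
\end{itemize}

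\textbf{Your ``simpler alternative'' for the zero--one step does not work as sketched, so do not rely on it.}
\begin{itemize}
\item The translation claim fails. Translating by $a/b$ turns $|x-p/q|<\psi(q)/q$ into an approximation with denominator $bq$ and error $b\psi(q)/(bq)$. For a general non-increasing divergent $\psi$ the ratio $b\psi(q)/\psi(bq)$ is unbounded; let $\psi$ drop by ever larger factors at sparse points. So $\limsup E_q$ is not invariant up to a constant rescaling of $\psi$. Even if it were, you would still need Cassels' lemma $\lambda(\mathcal W(c\psi))=\lambda(\mathcal W(\psi))$.
\item A uniform local quasi-independence does not follow from the global count. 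Take $\gcd(q,r)=1$ and $\Delta<q$. The admissible numerators are $p\equiv m\bar r\pmod q$ with $0<|m|<\Delta$, where $\bar r$ is the inverse of $r$ modulo $q$. These need not be equidistributed in a short interval $I$: if $r\equiv1\pmod q$, they all lie within $\Delta/q$ of an integer. So the pair-by-pair bound $\lambda(E_q\cap E_r\cap I)\ll\lambda(I)\psi(q)\psi(r)$ fails.
\end{itemize}
Getting a uniform local constant needs an averaging argument, a ubiquity framework, or Khintchine's continued-fraction approach. Citing the zero--one law is the simplest way to close the proof.
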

One of the major recent developments in this direction is the resolution
of the Duffin--Schaeffer conjecture (1941) by Koukoulopoulos and Maynard
\cite{KM20}. The conjecture asked for the same
zero--one law as Khintchine's theorem without the monotonicity assumption
on \(\psi\), provided that the natural coprimality condition on the
rational approximants is imposed. Note that although Khintchine's theorem is a remarkable result, it does not distinguish between different null sets $\WW(\psi)$ arising for fast decreasing function $\psi$. The
classical refinement is provided by two independent result by Jarn\'ik (1928) and Besicovitch (1934), now commonly known as Jarn\'ik--Besicovitch theorem. Whenever $\psi_\tau(q):=q^{-\tau}$, we write $ \WW(\tau):=\WW(\psi_\tau).$~Then,
\begin{theorem}[Jarn\'ik--Besicovitch, 1928--34]
\label{thm:jb}
For any $\tau\geq 1$, we have
\[
\HD \WW(\tau) = \frac{2}{\tau+1}.
\]
\end{theorem}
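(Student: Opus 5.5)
The proof splits into an upper bound via natural covers and a lower bound via a Cantor-type construction with the mass distribution principle. For $\tau=1$ Dirichlet's theorem gives $\WW(1)=\mathbb R$, so the dimension is $1=2/(\tau+1)$. Hence we may assume $\tau>1$, and since $\WW(\tau)$ is invariant under integer translations it suffices to work in $[0,1)$.

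\emph{Upper bound.} Every $x\in\WW(\tau)\cap[0,1)$ lies in infinitely many intervals
\[
I(p,q)=\bigl(p/q-q^{-1-\tau},\,p/q+q^{-1-\tau}\bigr),\qquad 0\le p\le q .
\]
So for each $N$ the family $\{I(p,q):q\ge N,\ 0\le p\le q\}$ covers the set. For $s>2/(\tau+1)$ the corresponding $s$-sum is
\[
\sum_{q\ge N}(q+1)\,(2q^{-1-\tau})^{s}\ll \sum_{q\ge N} q^{1-s(1+\tau)} .
\]
This converges because $s(1+\tau)>2$, so the tail tends to $0$ as $N\to\infty$. The diameters $2N^{-1-\tau}$ also tend to $0$. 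Hence $\mathcal H^s(\WW(\tau))=0$, which gives $\HD\WW(\tau)\le 2/(\tau+1)$.

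\emph{Lower bound.} Fix $s<2/(\tau+1)$. The goal is a Cantor set $K\subset\WW(\tau)$ carrying a probability measure $\mu$ with $\mu(B(x,r))\ll r^{s}$. Choose a rapidly increasing sequence $Q_1<Q_2<\cdots$. Level $k$ of the construction consists of intervals of length roughly $Q_k^{-1-\tau}$ centred at rationals $p/q$ with $q\in[Q_k,2Q_k]$, $q$ prime. We keep only those nested inside a level-$(k-1)$ interval $J$, and enough of them that their centres are spaced at least $\asymp Q_k^{-2}$ apart. The main counting input is a local ubiquity statement. For any interval $J$ with $|J|\ge Q_k^{-1}$ (guaranteed by growing $Q_k$ fast), the rationals $p/q$ with $q\in[Q_k,2Q_k]$ prime and $p/q\in J$ number $\gg |J|\,Q_k^{2}/\log Q_k$. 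Moreover, any two distinct ones are at distance at least $1/(4Q_k^2)$. This follows from an elementary count, or from Dirichlet applied to each point of $J$ with a $\tfrac12$-fraction argument as in ubiquity. Keep a maximal $Q_k^{-2}$-separated subfamily, shrink each to length $Q_k^{-1-\tau}$, and distribute the mass of $J$ equally among the children. Every point of $K$ then satisfies $\|qx\|<q^{-\tau}$ for some $q\in[Q_k,2Q_k]$ for every $k$, up to harmless constants that are absorbed by replacing $\tau$ with $\tau+\epsilon$. Hence $K\subset\WW(\tau)$.

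The hard step is the Hölder estimate $\mu(B(x,r))\ll r^{s}$ at all scales. Let $\ell_k=Q_k^{-1-\tau}$ be the child length and $g_k=Q_k^{-2}$ the spacing. A level-$k$ interval gets mass $\mu_k\approx \mu_{k-1}\cdot(\ell_{k-1}Q_k^{2})^{-1}$, up to logarithms. Three ranges of $r$ need checking. For $r\le\ell_k$, the ball meets one level-$k$ interval. For $\ell_k\le r\le g_k$, it still meets one interval, and one needs $\mu_k\le \ell_k^{s}$. For $g_k\le r\le\ell_{k-1}$, it meets about $rQ_k^2$ intervals, and one needs $rQ_k^2\mu_k\ll r^{s}$. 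Both reduce to $\mu_k\lesssim \ell_k^{s}$, i.e.\ to $Q_k^{-2}\prod_{j<k}(\ell_jQ_j^2)^{-1}\lesssim Q_k^{-(1+\tau)s}$. This holds for $Q_k$ growing fast enough, say $Q_k\ge \exp(Q_{k-1})$, because then the product over $j<k$ and the logarithmic losses are $Q_k^{o(1)}$, while $2>(1+\tau)s$ leaves a power saving. The mass distribution principle then yields $\HD K\ge s$. Letting $s\uparrow 2/(\tau+1)$ finishes the proof.
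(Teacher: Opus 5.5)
You have the right architecture, but there is a genuine gap in the H\"older estimate on the range $g_k\le r\le \ell_{k-1}$. Your upper bound and your treatment of $\tau=1$ are correct. For reference, the paper never proves Theorem~\ref{thm:jb}. It quotes it, and uses only the covering upper bound you give (Section~\ref{sec:dimension}); a lower bound would also follow from Theorem~\ref{thm:main} and Corollary~\ref{cor:gauge-comparable}, since $\WW(\tau)\cap\Bad^\gamma\subseteq\WW(\tau)$.

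\textbf{The false reduction.} Your claim that this range ``reduces to $\mu_k\lesssim\ell_k^s$'' is false. The required bound $rQ_k^2\mu_k\ll r^s$ is equivalent to $r^{1-s}Q_k^2\mu_k\ll 1$. The left side increases with $r$, so the binding case is $r=\ell_{k-1}$, not $r=g_k$. Because of the primality restriction, each parent has only $N_k\asymp \ell_{k-1}Q_k^2/\log Q_k$ children. Hence $Q_k^2\mu_k\asymp \mu_{k-1}\log Q_k/\ell_{k-1}$, and the condition at $r=\ell_{k-1}$ becomes
\[
\mu_{k-1}\log Q_k\ll \ell_{k-1}^{s}.
\]
This is a condition on level $k-1$ that involves $\log Q_k$. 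The saving available at level $k-1$ is at most of order $\ell_{k-1}^{s}/\mu_{k-1}\lesssim Q_{k-1}^{2-(1+\tau)s}$. Your growth condition $Q_k\ge \exp(Q_{k-1})$ forces $\log Q_k\ge Q_{k-1}$. So the inequality you need is false for every $s>1/(1+\tau)$, in particular for $s$ near $2/(\tau+1)$, and it gets worse the faster $Q_k$ grows.

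\textbf{The source of the loss.} The count ``about $rQ_k^2$ intervals'' uses only the minimal gap $Q_k^{-2}$. The kept prime-denominator fractions have average gap $\asymp Q_k^{-2}\log Q_k$, so at $r\approx\ell_{k-1}$ you overcount by exactly $\log Q_k$. Note also that $\mu_k\lesssim\ell_k^s$ is far weaker than what is needed here, since $Q_k^{-2}\ll Q_k^{-(1+\tau)s}$.

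\textbf{The repair.} It is standard, but it has to be made.
\begin{itemize}
\item For $Q_k^{-1}\le r\le \ell_{k-1}$, count directly. Each prime $q\in[Q_k,2Q_k]$ contributes at most $2rq+1\le 5rQ_k$ fractions to an interval of length $2r$. So $B(x,r)$ meets $\ll rQ_k^2/\log Q_k$ children, matching your lower bound on $N_k$. It also meets $O(1)$ level-$(k-1)$ intervals. This gives
\[
\mu(B(x,r))\ll \frac{r}{\ell_{k-1}}\,\mu_{k-1}= r^s\Bigl(\frac{r}{\ell_{k-1}}\Bigr)^{1-s}\frac{\mu_{k-1}}{\ell_{k-1}^{s}}\ll r^s,
\]
using the inductive bound $\mu_{k-1}\ll\ell_{k-1}^s$.
\item For $g_k\le r\le Q_k^{-1}$ the spacing bound suffices. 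It gives $\mu(B(x,r))\ll r^s\, Q_k^{-(1-s)}\log Q_k\,\mu_{k-1}/\ell_{k-1}$, and the factor $Q_k^{-(1-s)}$ absorbs everything once $Q_k$ is large.
\end{itemize}
Either of two alternatives also works. You can drop primality and use all reduced fractions, whose count $\asymp|J|Q_k^2$ matches the spacing bound. Or you can cap the growth, e.g.\ $Q_k=Q_{k-1}^{k}$, so that $\log Q_k=Q_{k-1}^{o(1)}$ is absorbed by the power saving at level $k-1$ while the product over $j<k$ is still $Q_k^{o(1)}$. With any of these repairs your construction proves the lower bound.
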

Thus, Theorem~\ref{thm:jb} confirms the intuition that the faster
\(\psi_\tau\) decays, the smaller the corresponding approximation set
\(\WW(\tau)\) becomes in terms of Hausdorff dimension. A natural complementary family consists of the badly approximable numbers
\[\Bad = \left\{x\in\mathbb{R}: \inf_{q\ge1} q\normZ{qx}>0\right\},\]
and their inhomogeneous analogues
\[\Bad^\gamma = \left\{x\in\mathbb R: \inf_{q\ge1} q\normZ{qx-\gamma}>0\right\}.\]
Both of these sets have full Hausdorff dimension; moreover they satisfy much stronger largeness properties in the sense of Schmidt games and their refinements \cite{BadziahinHarrap, BNY,  BFKRW, DattaShao, EinsiedlerTseng, FSU4,HatefiSimmons2024, McMullen2010, Schmidt1}.

\smallskip

One of the most important features of Schmidt-winning sets is their
stability under countable intersections. In particular,
$
\Bad\cap\Bad^\gamma$ 
is again winning. Consequently,
$
\Bad^\gamma\setminus\Bad$ 
cannot be winning in the classical Schmidt sense, since it is disjoint
from another winning set. In \cite{HatefiSimmons2024}, Hatefi and Simmons introduced a variant of the Schmidt game, called the {\em rapid game}, which does
not have the intersection property but for which ``winning'' still implies full dimension. Classical Schmidt-type winning properties are typically associated with
sets of full Hausdorff dimension. This raises the following natural
question.
\[
\text{\em Can a genuinely fractional-dimensional Diophantine set carry a natural winning property?}
\]
We answer this question affirmatively by introducing the
\(\Psi\)-rapid game, a scale-sensitive refinement of the rapid game of
Hatefi and Simmons adapted to a prescribed approximation function
\(\psi\). A consequence of this new game is the following result.

\begin{theorem}\label{cor:jb}For all $\gamma\in(\R/\Z)\setminus\{0\}$ and $\tau\geq 1$, we have 
\[
\HD\bigl(\WW(\tau)\cap\Bad^\gamma\bigr) = \frac{2}{\tau+1}.
\] 
\end{theorem}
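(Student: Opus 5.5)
The upper bound is immediate. Since $\WW(\tau)\cap\Bad^\gamma\subseteq\WW(\tau)$, Theorem~\ref{thm:jb} gives $\HD(\WW(\tau)\cap\Bad^\gamma)\le 2/(\tau+1)$. All the work is in the lower bound. Rather than going through the $\Psi$-rapid game machinery, I would build a Cantor-type subset of $\WW(\tau)\cap\Bad^\gamma$ directly and apply the mass distribution principle. The game would just be a convenient packaging of this construction.

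\emph{Construction.} Fix a small $c>0$ and a rapidly increasing sequence of scales $Q_1<Q_2<\cdots$. Level $k$ consists of intervals of the form
\[
I(p/q)=\Bigl[\tfrac pq-\tfrac{c}{q^{\tau+1}},\ \tfrac pq+\tfrac{c}{q^{\tau+1}}\Bigr],\qquad Q_k\le q<2Q_k,
\]
chosen inside the level-$(k-1)$ intervals. Every point of the limit set then lies in $\WW(\tau)$, after adjusting the constant via $\psi$ or taking $\tau'$ slightly smaller and letting $\tau'\to\tau$.

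Inside a parent interval $J$ of length $|J|\approx Q_{k-1}^{-(\tau+1)}$, the rationals with denominators in $[Q_k,2Q_k)$ are $\asymp Q_k^2|J|$ in number and are well spread once $Q_k^2|J|\gg1$. This is the standard Jarn\'ik--Besicovitch Cantor set (ubiquity of rationals), and by itself it gives dimension $2/(\tau+1)$ as $Q_k$ grows fast.

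\emph{Enforcing $\Bad^\gamma$.} The condition to impose is $\normZ{qx-\gamma}\ge\epsilon/q$ for all $q$. The denominators $q$ fall into blocks between consecutive scales, and I handle each block in two pieces.

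For $q$ up to roughly $Q_k$, I delete from the candidate set the neighbourhoods $\{x:\normZ{qx-\gamma}<\epsilon/q\}$, which are intervals of length $2\epsilon/q^2$ around the points $(p+\gamma)/q$. The standard Schmidt/Kurzweil-type counting applies: in an interval of length $\rho$, the inhomogeneous points $(p+\gamma)/q$ with $q\in[R,2R)$ and $R^2\approx\rho^{-1}$ are essentially at most $O(1)$ per dyadic block. This uses a simple-lattice argument that two such points in a short interval force a bound on a homogeneous quantity. So only a fraction $O(\epsilon)$ of the well-spread rational centres $p/q$ get removed at each dyadic scale.

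For $q$ between $Q_k$ and the next scale, I need the whole tiny interval $I(p/q)$, and its subdivisions, to stay away from the inhomogeneous resonances. The key observation is what happens at $x=p/q$ itself: we get $\normZ{qx-\gamma}=\normZ{\gamma}>0$ since $\gamma\neq0$. This is exactly where $\gamma\notin\Z$ is used. For $x\in I(p/q)$ and multiples $q'=mq$ with small $m$, the value $\normZ{mqx-\gamma}$ stays near $\normZ{m\cdot\text{integer}-\gamma}=\normZ{\gamma}$. For other $q'$ in this range, I again delete at most an $O(\epsilon)$ fraction at each dyadic scale of subintervals, by the same counting.

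\emph{Dimension count and main obstacle.} The main obstacle is the Bad-type deletions inside the tiny interval $I(p/q)$ for denominators just above $q$, because there the structure is forced by $p/q$. Near $p/q$, the quantity $q'x-\gamma$ for $q'=mq+r$ behaves like $r p/q-\gamma$, and this could be small. I would handle this by choosing $\epsilon\ll c\,\normZ{\gamma}$ and using the three-gap structure of $\{rp/q\}$ to show that, within each dyadic range of $q'$, the bad sets still occupy a proportion $O(\epsilon)$.

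With the deletions controlled, the surviving Cantor set has, at each level, a fraction at least $1-O(\epsilon\log Q_k)$ of the Jarn\'ik children. The $\log Q_k$ factor comes from summing the $O(\epsilon)$ losses over the $\sim\log Q_k$ dyadic scales. To make this summable I would instead require at each dyadic scale a loss of at most a fixed proportion $\delta<1$, and absorb the multiplicative loss $(1-\delta)^{\log Q_k}=Q_k^{-\eta(\delta)}$ into the exponent, where $\eta(\delta)\to0$ as $\epsilon\to0$. The mass distribution principle then gives dimension at least $2/(\tau+1)-\eta$. Letting $\epsilon\to0$ yields the lower bound, since the resulting subsets lie in $\Bad^\gamma$ for each $\epsilon>0$.
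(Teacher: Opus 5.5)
\textbf{Verdict: genuine gap.} Your upper bound is the same as the paper's. For the lower bound, the paper does not build a Cantor set by hand: it applies Theorem~\ref{thm:main} together with Corollary~\ref{cor:gauge-comparable}, with $\omega=(\tau-1)/2$. A direct Jarn\'ik-type construction is a legitimate alternative in principle. However, yours misses the one arithmetic condition everything hinges on, and it resolves the step you identify as the main obstacle incorrectly.

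\textbf{The controlling quantity is $\normZ{q\gamma}$, not $\normZ{\gamma}$.} Let $n_0$ be the integer nearest to $q\gamma$, and let $r_0\in[1,q)$ satisfy $r_0p\equiv n_0\pmod q$. Such an $r_0\neq0$ exists once $q>1/(2\normZ{\gamma})$. Then $\normZ{r_0p/q-\gamma}=\normZ{q\gamma}/q$, and for every $x\in I(p/q)$
\[
r_0\normZ{r_0x-\gamma}\le \normZ{q\gamma}+c\,q^{1-\tau}.
\]
This has two consequences.
\begin{itemize}
\item If $\normZ{q\gamma}<\epsilon/2$ and $q$ is large, the whole interval $I(p/q)$ lies outside $\Bad^\gamma$ at level $\epsilon$. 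For example, with $\gamma=1/2$ and $q$ even, the denominator $q'=q/2$ kills every point of $I(p/q)$.
\item If $\normZ{q\gamma}$ is merely comparable to $\epsilon$, the denominators $q'=mq+r_0$ produce, near the exit from the cusp, bad intervals centred at $\sigma/q'$ with $|\sigma|=\normZ{q\gamma}/q$. They have radius $\epsilon/q'^2$ and spacing $\asymp\normZ{q\gamma}/q'^2$, so they remove a proportion $\asymp\epsilon/\normZ{q\gamma}$ of the subintervals there, not $O(\epsilon)$.
\end{itemize}

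\textbf{Why your proposed remedy does not work.}
\begin{itemize}
\item Taking $\epsilon\ll c\normZ{\gamma}$ gives no lower bound on $\normZ{q\gamma}$. That quantity vanishes on whole residue classes when $\gamma$ is rational, and is arbitrarily small for a positive proportion of $q$ when $\gamma$ is irrational.
\item The three-gap structure of $\{rp/q\}$ only tells you these points are the multiples of $1/q$. It yields nothing beyond $\min_r\normZ{rp/q-\gamma}=\normZ{q\gamma}/q$.
\item The multiples $q'=mq$, which you single out as where $\gamma\neq0$ enters, are in fact the harmless residue class.
\item Your ``$O(1)$ inhomogeneous points per dyadic block'' count holds only at scales where $g_t\Lambda_x$ lies in a compact set. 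In the cusp window created by $p/q$ the points line up, and the count can be of order $1/\normZ{q\gamma}$.
\item In your first step, the fact that the deleted set has small measure does not carry over to the rational centres, because they are arithmetically correlated with the points $(p'+\gamma)/q'$. For $\gamma=1/2$, a fixed proportion of centres is removed regardless of $\epsilon$.
\end{itemize}

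\textbf{What is missing: a selection rule for the centres.}
\begin{itemize}
\item Admit as level-$k$ centres only those $p/q$ with $\normZ{q\gamma}\ge\kappa$, and take $\epsilon\ll\kappa$.
\item Prove that, in every parent interval, these centres still form a fixed positive proportion of the Jarn\'ik children. This needs an argument: the denominators near $p_0/q_0$ lie in progressions modulo $q_0$, so their distribution depends on $\normZ{q_0\gamma}$.
\end{itemize}
In the paper this is exactly the wedge-separation step in Lemma~\ref{lem:auxiliary-cusp}. For $a=(p-qx,q)$ one has $\Delta_a(\Lambda_x^\gamma)=\normZ{q\gamma}$, which is invariant along the excursion. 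Of the two basis vectors $v$ and $w+Nv$, at least one has separation at least $\kappa_\delta$. With this ingredient your direct construction could be made to work; without it the lower bound does not follow.
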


The set $\WW(\psi)\cap\Bad^\gamma$ may be viewed as a natural
$\psi$-analogue of the set $\Bad^\gamma\setminus\Bad$ studied in
\cite{HatefiSimmons2024}. 
It consists of points
$x\in[0,1)$ whose orbit $qx \pmod 1$ avoids the shrinking target
$
(\gamma-\tfrac{c(x)}{q},\,\gamma+\tfrac{c(x)}{q})$ 
for some $c(x)>0$, while simultaneously visiting the neighbourhood $
 (-\tfrac{\psi(q)}{q},\,\tfrac{\psi(q)}{q})$
of the origin infinitely~often. 
\begin{remark}
If $\gamma\equiv0\pmod 1$, then $\Bad^\gamma=\Bad$. Clearly, for $\tau>1$,
the intersection $\WW(\tau)\cap\Bad$ is empty. Thus the most interesting
case is $\gamma\not\equiv0\pmod 1$.
\end{remark}

\subsection{The Strong $\Psi$-Rapid Game}
\label{sec:game}

Let $X$ be a complete metric space. For $x\in X$ and $\rho>0$, write
$$B(x,\rho)=\{y\in X:d(x,y)\leq \rho\}.$$ Let $S\subseteq X$ be Alice's
target set. Fix a gauge $\Psi:(0,\rho_0]\to(0,1]$ and parameters
$0<\alpha,\beta<1$. Following McMullen \cite{McMullen2010}, we introduce
the strong $\Psi$-rapid game.

\begin{definition}[Strong $\Psi$-rapid game]
\label{def:game}
A play of the strong $\Psi$-rapid game consists of nested closed balls
\[
B_0\supseteq A_0\supseteq B_1\supseteq A_1\supseteq B_2\supseteq\cdots.
\]
At the $0$th stage, Bob begins by choosing $B_0=B(x_0,\rho_0)$ together
with a parameter $\alpha_0\in(0,\alpha)$. At the $n$th stage, after Bob
has chosen the ball $B_n=B(x_n,\rho_n)$, Bob also announces a number
$\alpha_n\in(0,\alpha)$. Then Alice chooses a ball
$A_n=B(y_n,\rho'_n)\subseteq B_n$ with $\rho'_n\geq \alpha_n\rho_n$. At
the $(n+1)$st stage, Bob chooses
\[
B_{n+1}=B(x_{n+1},\rho_{n+1})\subseteq A_n
\]
with $\rho_{n+1}\geq \beta\rho'_n$. Bob loses by default if the radii do not tend to zero, that is if
$\inf_n\alpha_n>0$. If the radii tend to zero, the nested balls have a
unique intersection point
\[
\{x_\infty\}=\bigcap_{n=0}^\infty B_n.
\]
Bob is required to play $\Psi$-rapidly:
\[
\liminf_{n\to\infty}\frac{\alpha_n}{\Psi(\rho_n)}=0.
\]
Alice wins if either
\[
\liminf_{n\to\infty}\frac{\alpha_n}{\Psi(\rho_n)}>0,
\]
so that Bob loses by default, or else the unique intersection point
$x_\infty$ is in the target set $S$.
\end{definition}

\begin{definition}[Strong $\Psi$-rapid winning]
\label{def:winning}
A set $S\subseteq X$ is called strong $(\Psi,\alpha,\beta)$-rapid winning
on $X$ if Alice has a winning strategy for the strong
$(\Psi,\alpha,\beta)$-rapid game with target $S$. It is called strong
$\Psi$-rapid winning if this holds for every sufficiently small
$\beta>0$.
\end{definition}

\begin{remark}
If $\Psi\equiv1$, then the default condition becomes
$\liminf_{n\to\infty}\alpha_n=0$. Thus the strong $\Psi$-rapid game reduces
to the strong rapid game of \cite{HatefiSimmons2024}. This is the case
corresponding to the usual Dirichlet exponent in dimension one.
\end{remark}

\begin{remark} The strong game gives Bob greater flexibility than the equality-radius
game. Therefore
\[
\text{strong $\Psi$-rapid winning} \Longrightarrow
\text{ordinary $\Psi$-rapid winning}.
\]
The strong formulation is convenient in the grid proof because the passage
between game balls and diagonal-flow times is stable up to bounded changes
of scale. All lattice and grid estimates below are uniform under such
bounded perturbations. This is the same role played by the strong game in
the rapid-game proof.
\end{remark}

In the strong \(\Psi\)-rapid game, Bob's parameter \(\alpha_n\) measures
the scale at which Alice must respond. At scale \(\rho_n\), a successful move produces a denominator
\(q\asymp \rho_n^{-1/2}\), and the resulting approximation error is of
size \(\lessless q^{-1}\alpha_n\).
Thus the condition \(\alpha_n\lessless q\psi(q)\) is sufficient to force
\[
\normZ{qx}<\psi(q),
\]
which naturally leads to the scale
\[
\Psi_\psi(\rho)\asymp \rho^{-1/2}\psi(\rho^{-1/2}).
\]

To make this precise, fix constants \(0<a<b<\infty\). For \(Q\ge1\),
define
\[
\Phi_\psi(Q)=\inf_{aQ\le q\le bQ} q\psi(q).
\]
Choose a sufficiently small constant \(c_*>0\), and set
\[
\Psi_\psi(\rho)=c_*\Phi_\psi(\rho^{-1/2}).
\]
In particular, if \(\psi(q)=q^{-\tau}\), then
\[
\Psi_\psi(\rho)\asymp \rho^{(\tau-1)/2}.
\]

We may now state the main result 
that shows that
$\WW(\psi)\cap\Bad^\gamma$ 
is strong \(\Psi\)-rapid winning.

\begin{theorem}\label{thm:main}
Let $\gamma\in\R/\Z$ and let $\psi:\N\to(0,\infty)$ be an
approximation function. Let $\Psi_\psi$ be defined as above. Then
$\WW(\psi)\cap\Bad^\gamma$ is strong $\Psi_\psi$-rapid winning on every
nonempty interval.
\end{theorem}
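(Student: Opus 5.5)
The plan is to build Alice's strategy out of two kinds of moves. She uses \emph{defensive moves} to keep the limit point in $\Bad^\gamma$. She uses \emph{attacking moves} at the stages where Bob's parameter is small compared with the gauge, $\alpha_n\le \Psi_\psi(\rho_n)$, to force one more solution of $\normZ{qx}<\psi(q)$. Bob is required to play $\Psi_\psi$-rapidly, so $\liminf_n \alpha_n/\Psi_\psi(\rho_n)=0$. Hence either there are infinitely many stages with $\alpha_n\le \eta_n\Psi_\psi(\rho_n)$ for some sequence $\eta_n\to0$, or Alice wins by default. So it suffices to have Alice attack along a subsequence of such stages and defend at all other stages.

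For the attacking move at stage $n$, set $Q=\rho_n^{-1/2}$. The key claim is a Farey/grid lemma: at suitably prepared stages the ball $B_n$ contains a rational $p/q$ with $aQ\le q\le bQ$, and this rational sits at distance $\gg\rho_n$ from $\partial B_n$. The rationals with $q\le bQ$ have mutual gaps $\ge 1/(b^2Q^2)\asymp\rho_n$. By Dirichlet they are $\rho_n$-dense away from the few low-height rationals, around which the gaps are of size $1/(q'Q)$. Alice then plays $A_n=B(p/q,\alpha_n\rho_n)$. Every $x\in A_n$ satisfies
\[
q\,\normZ{qx}\le q^2\alpha_n\rho_n\le b^2\alpha_n\le b^2c_*\,\eta_n\,\Phi_\psi(Q)\le q\psi(q),
\]
provided $c_*\le b^{-2}$. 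Since $x_\infty\in A_n$, each attacking stage produces a new solution, and the denominators $q\asymp\rho_n^{-1/2}\to\infty$ are distinct. Infinitely many attacks therefore give $x_\infty\in\WW(\psi)$.

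The defensive moves follow the strategy of \cite{HatefiSimmons2024} for $\Bad^\gamma$. At scale $\rho$ the dangerous sets are the intervals $\{x:|qx-p-\gamma|<c/q\}$ with $q\asymp\rho^{-1/2}$. In the diagonal-flow / grid picture these are points of the shifted lattice $g_t(\Lambda_x+v_\gamma)$ entering a small box. A simplex-type lemma shows that within a ball of radius $\rho$ all such targets lie in one or boundedly many subintervals of relative size $O(c)$. Alice can then dodge them by a bounded number of moves of ratio $\alpha$, and the constant $c=c(\beta)$ is chosen small. The same lemma is used to \emph{prepare} the attacking stages: in the $O(1)$ moves before an attack, Alice steers away from the neighbourhoods of low-height rationals. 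This guarantees that the Farey lemma applies at the next scale, so that a rational of height $\asymp\rho_n^{-1/2}$ lies well inside $B_n$. The strong version of the game is what makes this work: Bob's radii can only shrink by the bounded factors $\beta$ and $\alpha$ per stage, so passing from ball radii to flow times costs only bounded errors.

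The main obstacle is compatibility. An attacking move commits to the tiny ball $A_n$ around $p/q$, and Alice cannot afterwards dodge the inhomogeneous targets of denominators $q'$ with $\rho_n^{-1/2}\lesssim q'\lesssim(\alpha_n\rho_n)^{-1/2}$. For $x$ near $p/q$ we have $q'x\approx q'p/q \pmod 1$, so $p/q$ must be chosen so that the finite orbit $\{kp/q\}$ stays $\gg 1/q'$-away from $\gamma$ on this range. My first approach is a counting argument inside $B_n$. There are $\gg 1$ admissible rationals of height $\asymp Q$, because the constants $a<b$ give room, and each bad $q'$ excludes only $O(c)$ of them. This uses the separation of Farey fractions together with the fact that $\gamma\not\equiv0$; the case $\gamma\equiv 0$ is excluded in Theorem~\ref{cor:jb}. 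When $\alpha_n$ is so small that this range of $q'$ is very long, I would cap the attack instead: play $A_n$ of radius $\asymp\Psi_\psi(\rho_n)\rho_n$, which is still admissible since $\alpha_n\le\Psi_\psi(\rho_n)<\alpha$, and leave the remaining shrinking to later defensive stages. This keeps the unprotected window of scales bounded. Verifying this step uniformly in $\psi$ is where I expect the real work to lie.
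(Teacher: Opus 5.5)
Your overall architecture matches the paper's. You use a Hatefi--Simmons default strategy that keeps the lattice in $\mathcal K_\zeta$ and the grid in some $\mathcal F_\delta$. At stages where $\alpha_n/\Psi_\psi(\rho_n)$ is small you attack by centring a ball of radius $\asymp\alpha_n\rho_n$ at a rational $p/q$ with $q\asymp\rho_n^{-1/2}$; the paper's horocycle choice $z=a_1/a_2+\alpha$ is this move, shifted by one radius. Otherwise Bob loses by default.

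The gap is the step you flag yourself, and neither device you propose closes it.
\begin{itemize}
\item The counting cannot work. $B_n$ contains only $O(b^2)$ rationals of height in $[aQ,bQ]$. The unprotected window $Q\le q'\le(\alpha_n\rho_n)^{-1/2}$ contains about $\alpha_n^{-1/2}Q$ denominators, and Bob may make $\alpha_n$ as small as he likes. So a bound of the form ``each bad $q'$ excludes $O(c)$ candidates'' cannot be summed over $q'$.
\item Capping does not help either. The attack must keep $|x-p/q|\lesssim\psi(q)/q\asymp\Psi_\psi(\rho_n)\rho_n$ in any case. The capped window therefore still runs up to $q'\asymp\Psi_\psi(\rho_n)^{-1/2}Q$, which is $\asymp Q^{(\tau+1)/2}$ for $\psi(q)=q^{-\tau}$. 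Its logarithmic length $\tfrac12\log(1/\Psi_\psi(\rho_n))$ is unbounded whenever $\Psi_\psi\to0$. That is exactly the regime $\tau>1$ that produces the fractional dimension.
\end{itemize}

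The missing idea is the wedge argument in Lemma~\ref{lem:auxiliary-cusp}. The whole window is governed by one number attached to the candidate, namely $\normZ{q\gamma}$, not by a union over $q'$.

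For all $q',m$ one has $|q'p/q-m-\gamma|=|q'p-mq-q\gamma|/q\ge\normZ{q\gamma}/q$. Hence if $\normZ{q\gamma}\ge\kappa$ and $|x-p/q|\le d$, then $q'\normZ{q'x-\gamma}\ge c$ for every $q'$ with $2cq/\kappa\le q'\le\kappa/(2qd)$. For $d\asymp\alpha_n\rho_n$ the upper end is $\asymp\kappa\alpha_n^{-1}Q$. This range contains the whole window, and only more so as $\alpha_n\to0$.

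In grid language, take $a\leftrightarrow(p,q)$ and a grid vector $b\leftrightarrow(m+\gamma,n)$. Then $a\wedge b=pn-qm-q\gamma$ is invariant under $g_tu_x$, so the wedge separation $\Delta_a$ of the grid equals $\normZ{q\gamma}$. The bound $|a\wedge b|\le2\|a\|_\infty\|b\|_\infty$ then keeps the grid outside $B_\infty(0,\kappa/(4C_0))$ as long as $\|a\|_\infty\le2C_0$, that is, through the entire cusp excursion.

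Such a candidate always exists. Take a unimodular pair $v,b$ of lattice vectors in the cone $|x|<|y|/2$. These are two rationals of height $\asymp Q$ in the middle of $B_n$ with $|pq'-p'q|=1$. Write the shift as $r=sv+tb$, so that $\Delta_v=\normZ{t}$ and $\Delta_b=\normZ{s}$. If both were small, the grid would have a point within $O(\normZ{s}+\normZ{t})$ of the origin. That contradicts the fact that your defence has kept the grid in $\mathcal F_\delta$.

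This is also where $\gamma\not\equiv0$ genuinely enters, and you are right to assume it. For $\gamma\equiv0$ and $q\psi(q)\to0$ the target set is empty, so the statement as written fails there.

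A smaller point: Alice cannot ``prepare in the $O(1)$ moves before an attack'', because Bob decides when small $\alpha_n$ occur. She must stay prepared, with the lattice in $\mathcal K_\zeta$, at all times except during the finitely many recovery rounds after each attack. She should attack only at prepared opportunities.
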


We will see that Theorem~\ref{thm:jb} follows swiftly from Theorems ~\ref{thm:main} and \ref{cor:jb}. 



\medskip

The proof follows the same broad strategy as the rapid-game argument of
Hatefi--Simmons \cite{HatefiSimmons2024}, but with one important
modification: the auxiliary move is now only a one-way cusp-entry move, while the
return to a bounded region is delegated to the default strategy. This
separation is important to eliminate the square-root loss and is what leads to the sharp
Jarn\'ik--Besicovitch scaling.

The remainder of the paper is organised as follows.
Section~2 we prove the dimension consequence of the $\Psi$-rapid game. 
Section~3 introduces the homogeneous and inhomogeneous lattice machinery.
Section~4 establishes the winning property of
\(\WW(\psi)\cap\Bad^\gamma\).
Finally, Section~5 derives the Jarník--Besicovitch dimension formula.
\section{The Dimension Proposition}

In this section, we prove the Hausdorff dimension consequence of
the $\Psi$-rapid winning.

\begin{proposition}
\label{prop:dimension}
Let $I_0\subset\mathbb{R}$ be a non-empty closed interval. Let
\[
\Psi_\omega(\rho)=\rho^\omega, \qquad \omega\geq0.
\]
Fix $\alpha,\beta > 0$, and suppose $S\subseteq I_0$ is strong $(\Psi_\omega,\alpha,\beta)$-rapid winning.
Fix an ordinary contraction ratio $\alpha_0\in(0,\alpha)$, and let $m_\beta$ be
an integer such that after each Alice move Bob has at least $m_\beta$
pairwise disjoint legal replies of radius at least $\beta$ times Alice's
radius. Set
\[
\Delta_\beta = \frac{\log m_\beta}{\log(1/(\alpha_0\beta))}.
\]
Then
\[
\HD S \geq \frac{\Delta_\beta}{1+\omega}.
\]
In particular, for any fixed $\alpha_0\in(0,\alpha)$, taking
\[
m_\beta=\left\lfloor\frac{1}{4\alpha_0\beta}\right\rfloor,
\]
one has $\Delta_\beta\to1$ as $\beta\to0$. Hence if $S$ is strong
$(\Psi_\omega,\alpha,\beta)$-rapid winning for some $\alpha > 0$ and for every sufficiently small
$\beta>0$, then
\[
\HD S \geq \frac{1}{1+\omega}.
\]
\end{proposition}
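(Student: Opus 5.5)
The plan is to build a Cantor set inside $S$ by letting Bob play against Alice's winning strategy, and then to bound its dimension from below with the mass distribution principle. Bob plays a fixed ``rapid schedule''. Most of the time he announces the ordinary ratio $\alpha_0$. Along a sparse sequence of stages $n_1<n_2<\cdots$ he instead announces
\[
\alpha_{n_k}=\frac{\Psi_\omega(\rho_{n_k})}{k}=\frac{\rho_{n_k}^{\omega}}{k}.
\]
This gives $\liminf_n \alpha_n/\Psi_\omega(\rho_n)=0$, so Bob never loses by default. Since $\alpha_{n_k}\to0$, we also have $\inf_n\alpha_n=0$, and since all ratios are bounded by $\alpha<1$ the radii tend to zero. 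At each of his turns Bob branches over $m_\beta$ pairwise disjoint legal replies of radius exactly $\beta$ times Alice's radius. To keep the scales exact, Bob always chooses replies of exactly this radius, and we may assume Alice's balls have radius exactly $\alpha_n\rho_n$ (if she chooses a larger ball, we shrink it to a concentric subball of that radius, and her strategy then continues from this subball, as is standard in the strong game). Because Alice's strategy is winning, every branch ends at a point of $S$. The resulting tree of balls therefore defines a Cantor set $K\subseteq S$ with $m_\beta$ disjoint children at each generation.

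Next we compute the scales. Write $s_n=\rho_n$. Then $s_{n+1}=\alpha_n\beta s_n$, so
\[
s_N=\rho_0\,(\alpha_0\beta)^{N-k(N)}\prod_{k\le k(N)}\Bigl(\beta\,\frac{s_{n_k}^{\omega}}{k}\Bigr),
\]
where $k(N)$ is the number of rapid stages before $N$. Put the uniform mass on the tree, so that each generation-$N$ ball carries mass $m_\beta^{-N}$. Disjointness of the children gives gaps comparable to $s_{N+1}$ between siblings. The standard mass distribution argument then yields $\HD K\ge\liminf_N \log m_\beta^{N}/\log(1/s_N)$. One has to be careful at the rapid stages, where the ratio $s_{n+1}/s_n$ is tiny; I would handle this either with the general Cantor-set formula of McMullen type, or by inserting $m_\beta$-branching through intermediate pseudo-generations. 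The liminf is only guaranteed to be correct at the generations just after a rapid stage, since that is where $\log(1/s_N)$ jumps.

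The heart of the matter is choosing the sparse sequence $n_k$ optimally. Each rapid move multiplies $\log(1/s)$ by roughly $(1+\omega)$, because $s_{n_k+1}\approx s_{n_k}^{1+\omega}$ up to the factors $\beta/k$. If the $n_k$ are spaced far apart, say $n_{k+1}-n_k\to\infty$ fast, then between rapid moves $\log(1/s)$ grows linearly at rate $\log(1/(\alpha_0\beta))$. At generation $N=n_k$ this gives $\log(1/s_N)\approx(1+o(1))N\log(1/(\alpha_0\beta))$, provided the earlier accumulated multiplicative factors are negligible. That requires $n_k$ to grow much faster than geometrically, for instance like $k!$-type or doubly exponential spacing; then the product of the factors $(1+\omega)$ is dominated by the linear part. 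Right after the rapid stage, $\log(1/s)$ becomes about $(1+\omega)N\log(1/(\alpha_0\beta))$, while the number of generations is still $N$. The ratio therefore dips to $\Delta_\beta/(1+\omega)$ and then recovers. The factors $\log k$ and $\log\beta$ are $o(N)$. Hence the liminf equals $\Delta_\beta/(1+\omega)$.

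The final claim is a direct computation:
\[
\log m_\beta=\log(1/(\alpha_0\beta))-\log 4+o(1),
\]
so $\Delta_\beta\to1$ as $\beta\to0$, and $\HD S\ge \sup_\beta \Delta_\beta/(1+\omega)=1/(1+\omega)$. The existence of $m_\beta$ disjoint legal replies inside an interval of radius $r$, each of radius $\beta r$, is elementary. The main obstacle is the mass distribution estimate across the rapid stages. A ball of radius $r$ with $s_{n_k+1}\le r\le s_{n_k}$ meets at most one generation-$n_k$ ball, but it may meet many generation-$(n_k+1)$ balls. The worst case must be checked to still give the exponent $\Delta_\beta/(1+\omega)$ and not something smaller. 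I would verify this by bounding the number of generation-$(n_k+1)$ balls met by $m_\beta\lesssim$ a constant, using the fact that at a rapid stage Bob branches only $m_\beta$ times in total, so the ratio stays controlled.
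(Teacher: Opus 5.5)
Your proposal is correct and is essentially the paper's proof: a tree of $m_\beta$ disjoint Bob replies played against Alice's winning strategy, ordinary $\alpha_0$-rounds interleaved with sparse microscopic rounds $\alpha_n=\eta\rho_n^{\omega}$ with $\eta\to0$ slowly (your super-geometric spacing and $\eta_k=1/k$ stand in for the paper's adaptive stopping rule and $\eta_j=\exp(-\sqrt{R_{n_j}})$), the ratio $\log N_n/\log(1/\rho_n)$ dipping to $\Delta_\beta/(1+\omega)$ just after each microscopic round, and the Frostman bound $\mu(J)\le C N_{n-1}^{-1}=C m_\beta N_n^{-1}\le C m_\beta r(J)^s$ for $\rho_n\le r(J)<\rho_{n-1}$, which is exactly your ``at most $Cm_\beta$ children'' fix at the rapid stages.

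Two minor points, both shared with the paper. First, you cannot shrink Alice's ball in the strong game and keep her strategy valid, because Bob's reply must have radius at least $\beta$ times her actual radius. Exact radii are unnecessary, though: if Bob replies at radius $\beta\rho'_n$, every radius stays at least its model value, and the mass estimate tolerates that. Second, an interval of radius $r$ holds at most $1/\beta$ disjoint subintervals of radius $\beta r$, so $\lfloor 1/(4\alpha_0\beta)\rfloor$ replies do not exist when $\alpha_0<1/4$. The conclusion $\Delta_\beta\to1$ still holds with $m_\beta\asymp 1/\beta$ for fixed $\alpha_0$.
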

\begin{proof}
Fix an initial interval $B_0=I_0=B(x_0,\rho_0)$. Let $\sigma$ be Alice's
winning strategy for the $(\Psi_\omega,\alpha,\beta)$-rapid game. We construct a
Cantor set $K\subset S$ by allowing Bob to play a tree of legal moves
against the fixed strategy $\sigma$. The construction has two types of
rounds.

\medskip

\noindent
\textbf{Ordinary rounds.} At an ordinary round, Bob chooses a fixed number
$0<\alpha_0<\alpha$. Alice then responds, according to $\sigma$, with an
interval $A_n\subset B_n$ with $r(A_n)=\alpha_0\rho_n$. Inside $A_n$, Bob
has at least $m_\beta$ pairwise disjoint legal choices for $B_{n+1}$, each
of radius $\rho_{n+1}=\beta\alpha_0\rho_n$. The exact constant in $m_\beta$
is not important; all that matters is that $m_\beta\asymp \beta^{-1}$.

For definiteness, one may take
\[
m_\beta=\left\lfloor\frac{1}{4\alpha_0\beta}\right\rfloor
\]
when $0<\beta<1/(10\alpha_0)$. Note that any interval of length $L$ contains at least $\lfloor L/(4\alpha_0\beta r(A_n))\rfloor$ disjoint subintervals of length $\beta r(A_n)$ separated by gaps of at least their own length.

\medskip

\noindent
\textbf{Microscopic rounds.} At the $j$th microscopic round, Bob chooses
$\alpha_n=\eta_j \rho_n^\omega$, where $(\eta_j)$ is a sequence tending to zero sufficiently slowly, as described below. Since
$\rho_n\to0$, after discarding finitely many initial rounds we may assume
that
\[
0<\eta_j\rho_n^\omega<\alpha,
\]
so this is a legal Bob move. Alice responds with
\[
A_n\subset B_n, \qquad r(A_n)=\eta_j\rho_n^{1+\omega}.
\]
Bob again chooses one of at least $m_\beta$ disjoint legal replies inside
$A_n$, each of radius
\[
\rho_{n+1} = \beta\eta_j\rho_n^{1+\omega}.
\]

The key point is that along every infinite branch of this tree,
\[
\frac{\alpha_n}{\Psi_\omega(\rho_n)}
=
\frac{\eta_j\rho_n^\omega}{\rho_n^\omega}
=
\eta_j
\longrightarrow0
\]
at the microscopic rounds. Hence every branch satisfies the default
condition
\[
\liminf_{n\to\infty}\frac{\alpha_n}{\Psi_\omega(\rho_n)}=0.
\]
Therefore Alice's winning strategy forces the limit point of every branch
to lie in $S$. Thus the Cantor set $K$ constructed from all branches satisfies
$K\subset S$.

\smallskip

It remains to estimate the dimension of $K$.

\smallskip

Let $N_n$ denote the number of level $n$ intervals in the construction.
Since every interval has exactly $m_\beta$ children, we have
$N_n=m_\beta^n$. Let $\rho_n$ denote the common radius of all level $n$
intervals, and put $R_n:=-\log\rho_n$. We will prove that the microscopic
rounds can be arranged so that
\[
\liminf_{n\to\infty}\frac{\log N_n}{R_n}
\geq
\frac{\Delta_\beta}{1+\omega}.
\]
This will imply $\HD K\geq \Delta_\beta/(1+\omega)$. Indeed, once the above
inequality is known, let $s<\Delta_\beta/(1+\omega)$. Then, for all
sufficiently large $n$, $N_n^{-1}=m_\beta^{-n}\leq \rho_n^s$. Put the
uniform probability measure $\mu$ on $K$, assigning mass $N_n^{-1}$ to
each level $n$ interval.

Let $J\subset\mathbb R$ be any interval with sufficiently small radius.
Choose $n$ such that $\rho_n\leq r(J)<\rho_{n-1}$. Since the level $n-1$
construction intervals are pairwise disjoint and all have radius
$\rho_{n-1}$, the interval $J$ intersects at most a bounded number, say
$C$, of level $n-1$ intervals. Hence
\[
\mu(J)
\leq C N_{n-1}^{-1}
= C m_\beta N_n^{-1}
\leq C m_\beta \rho_n^s.
\]
Since $\rho_n\leq r(J)$, this gives $\mu(J)\leq C m_\beta r(J)^s$. Thus
$\mu$ is an $s$-Frostman measure, and so $\HD K\geq s$. Letting
$s\to \Delta_\beta/(1+\omega)$ gives the desired lower bound.

We now verify the logarithmic estimate. Let $h:=\log(1/(\alpha_0\beta))$.
During an ordinary round, we have
\[
R_{n+1}=R_n+h,
\qquad
\log N_{n+1}=\log N_n+\log m_\beta.
\]
Thus repeated ordinary rounds drive the ratio $\log N_n/R_n$ towards
$\Delta_\beta=\log m_\beta/h$.

We choose the construction in blocks. Suppose the $(j-1)$th microscopic
round has been completed. We then play sufficiently many ordinary rounds so
that, just before the $j$th microscopic round, at some level $n_j$, we
have
\[
\frac{\log N_{n_j}}{R_{n_j}}
\geq
\Delta_\beta-\varepsilon_j,
\]
where $\varepsilon_j\to0$. Now perform the $j$th microscopic round, with
$\alpha_{n_j}=\eta_j\rho_{n_j}^\omega$. Then
\[
\rho_{n_j+1}
=
\beta\eta_j\rho_{n_j}^{1+\omega}.
\]
Taking logarithms gives
\[
R_{n_j+1}
=
(1+\omega)R_{n_j}+\log(1/(\beta\eta_j)),
\qquad
\log N_{n_j+1}
=
\log N_{n_j}+\log m_\beta.
\]
We choose $\eta_j\to0$ so slowly that $\log(1/\eta_j)=o(R_{n_j})$. For
instance, after $R_{n_j}$ is known, one may take
$\eta_j=\exp(-\sqrt{R_{n_j}})$. Then
\[
\frac{\log(1/(\beta\eta_j))}{R_{n_j}}\longrightarrow0.
\]
Therefore
\[
\frac{\log N_{n_j+1}}{R_{n_j+1}}
=
\frac{\log N_{n_j}+\log m_\beta}
{(1+\omega)R_{n_j}+\log(1/(\beta\eta_j))}
\geq
\frac{\Delta_\beta-\varepsilon_j+o(1)}
{1+\omega+o(1)}.
\]
Hence, after increasing $j$ if necessary,
\[
\frac{\log N_{n_j+1}}{R_{n_j+1}}
\geq
\frac{\Delta_\beta}{1+\omega}-2\varepsilon_j.
\]

Between microscopic rounds we again play ordinary rounds. If the current
ratio is below $\Delta_\beta$, adding ordinary rounds replaces the ratio
by a weighted average with $\Delta_\beta$, and therefore increases it.
Consequently the smallest ratios occur immediately after microscopic
rounds. It follows that
\[
\liminf_{n\to\infty}\frac{\log N_n}{R_n}
\geq
\frac{\Delta_\beta}{1+\omega}.
\]
As shown above, this implies $\HD K\geq \Delta_\beta/(1+\omega)$. Since
$K\subset S$, we obtain $\HD S\geq \Delta_\beta/(1+\omega)$.

Finally, if $S$ is $(\Psi_\omega,\alpha,\beta)$-rapid winning for all
sufficiently small $\beta$, choose any fixed $\alpha_0\in(0,\alpha)$ and take
\[
m_\beta=\left\lfloor\frac{1}{4\alpha_0\beta}\right\rfloor.
\]
Then
\[
\Delta_\beta
=
\frac{\log \left\lfloor\frac{1}{4\alpha_0\beta}\right\rfloor}{\log(1/(\alpha_0\beta))}
\longrightarrow1
\qquad(\beta\to0).
\]
Thus $\HD S\geq 1/(1+\omega)$. This proves the proposition.
\end{proof}
The key observation is that the limit $\Delta_\beta\to1$ as $\beta\to0$ is independent of the particular choice of $\alpha_0$. Indeed, if $
m_\beta=\left\lfloor\frac{1}{c_\beta}\right\rfloor$ 
for any $c_\beta$ satisfying $c_\beta\asymp \alpha_0\beta$, then
\[
\Delta_\beta
=
\frac{\log m_\beta}{\log(1/(\alpha_0\beta))}
=
\frac{-\log(\alpha_0\beta)+O(1)}{-\log(\alpha_0\beta)}
\longrightarrow1.
\]
Thus we may choose $\alpha_0$ arbitrarily close to $\alpha$ to maximise the number of legal replies. If $\alpha$ is very small, then $\alpha_0$ must also be very small. This only affects the constant $m_\beta$ by a multiplicative factor, which does not change the limit $\Delta_\beta\to1$.
\begin{corollary}\label{cor:gauge-comparable}
Suppose $\Psi$ satisfies
\[
c_1\rho^\omega\leq \Psi(\rho)\leq c_2\rho^\omega
\]
for all sufficiently small $\rho$, with constants $c_1,c_2>0$. If $S$ is
strong $(\Psi,\alpha,\beta)$-rapid winning for every sufficiently small
$\beta>0$, then $\HD S\geq 1/(1+\omega)$.
\end{corollary}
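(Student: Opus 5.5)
The plan is to reduce Corollary~\ref{cor:gauge-comparable} to Proposition~\ref{prop:dimension}. One route is to show that strong $(\Psi,\alpha,\beta)$-rapid winning and strong $(\Psi_\omega,\alpha,\beta)$-rapid winning coincide under the stated comparability. A cleaner route is to rerun the Cantor-set construction of the proposition directly with $\Psi$ in place of $\Psi_\omega$. I will follow the first route and fall back on the second if needed.

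The first step is to compare the default conditions. Suppose $c_1\rho^\omega\le\Psi(\rho)\le c_2\rho^\omega$ for $\rho\le\rho_1$. Since $\rho_n\to0$ in any play in which the default condition matters, we have, for all large $n$,
\[
\frac{1}{c_2}\,\frac{\alpha_n}{\rho_n^\omega}\le\frac{\alpha_n}{\Psi(\rho_n)}\le\frac{1}{c_1}\,\frac{\alpha_n}{\rho_n^\omega}.
\]
Hence
\[
\liminf_{n}\frac{\alpha_n}{\Psi(\rho_n)}=0\iff\liminf_{n}\frac{\alpha_n}{\Psi_\omega(\rho_n)}=0.
\]
The two games therefore have identical legal moves, identical default (``Bob plays rapidly'') conditions on every play, and the same target $S$. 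So a winning strategy for Alice in one is a winning strategy in the other. It follows that $S$ is strong $(\Psi_\omega,\alpha,\beta)$-rapid winning for every sufficiently small $\beta$.

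The second step is simply to invoke the final clause of Proposition~\ref{prop:dimension}, which gives $\HD S\ge 1/(1+\omega)$. The proposition is stated for $S\subseteq I_0$. If $S$ is not contained in an interval, I would intersect with a closed interval $I_0$ and let Bob start with $B_0=I_0$. Only plays starting in $I_0$ are used in the Cantor construction, and $K\subset S\cap I_0$.

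I expect no real obstacle; the only point to check is that the comparison holds only for small $\rho$. This is harmless because the default condition is a $\liminf$ and depends only on the tail of the play, where $\rho_n\to0$. The one subtle case is a play where the radii do not tend to zero. There Bob loses by default in both games by definition, so this case does not distinguish them.

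As a sanity check via the fallback route, in the microscopic rounds Bob would play $\alpha_n=\eta_j\Psi(\rho_n)$, which lies between $c_1\eta_j\rho_n^\omega$ and $c_2\eta_j\rho_n^\omega$. Then $R_{n_j+1}=(1+\omega)R_{n_j}+O(\log(1/(\beta\eta_j)))$, and the constants $c_1,c_2$ are absorbed into the $o(R_{n_j})$ term. This yields the same $\liminf$ bound.
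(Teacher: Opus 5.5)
Your proposal is correct, but your main route differs from the paper's.

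\textbf{The paper's route.} The paper does not treat Proposition~\ref{prop:dimension} as a black box. It re-enters the Cantor-set construction and keeps Bob's microscopic moves $\alpha_n=\eta_j\rho_n^\omega$. It then checks directly that $\alpha_n/\Psi(\rho_n)\le\eta_j/c_1\to0$. So every branch still triggers the $\Psi$-default condition, and the radius bookkeeping is unchanged.

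\textbf{Your route.} You prove a strategy-transfer statement instead. The legal moves do not depend on the gauge, and the payoff depends on it only through the default condition. Hence Alice's winning strategy for the $\Psi$-game is also winning for the $\Psi_\omega$-game, after which the proposition applies as stated. Your handling of $S\not\subseteq I_0$ and of plays whose radii do not shrink is fine.

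\textbf{What each buys.} Your route yields a reusable fact: strong $\Psi$-rapid winning depends only on $\Psi$ up to multiplicative constants as $\rho\to0$. It also avoids re-verifying any step of the construction. The paper's route is shorter on the page, but it asks the reader to accept that nothing else in the construction changes.

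In both arguments only the lower bound $c_1\rho^\omega\le\Psi(\rho)$ is actually used. For the transfer you need only the implication $\liminf_n\alpha_n/\rho_n^\omega=0\Rightarrow\liminf_n\alpha_n/\Psi(\rho_n)=0$, not the full equivalence, so the constant $c_2$ is superfluous.

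Your fallback route, with $\alpha_n=\eta_j\Psi(\rho_n)$, is essentially the paper's argument. That choice is where a bounded additive error $\log(1/c_1)$ enters $R_{n_j+1}$. This matches the paper's remark about bounded additive errors.
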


\begin{proof}
The proof is unchanged from Proposition~\ref{prop:dimension}. At the
microscopic rounds choose $\alpha_n=\eta_j\rho_n^\omega$. Then
\[
\frac{\alpha_n}{\Psi(\rho_n)}
\leq
\frac{\eta_j}{c_1}\to0.
\]
The logarithmic radius calculation is unchanged up to bounded additive
errors.
\end{proof}

\section{Games on Grids}
\label{sec:grids}

In this section we reformulate the strong $\Psi$-rapid game in terms of
trajectories on the space of unimodular lattices and grids.

\subsection{Lattice and Grid Notation}

Let
\[
G=\SL_2(\R), \qquad \Gamma=\SL_2(\Z), \qquad X=G/\Gamma.
\]
Thus $X$ is (isomorphic to) the space of unimodular lattices in $\R^2$. Let
\[
Y=\{\Lambda+r:\Lambda\in X,\ r\in\R^2/\Lambda\}
\]
be the corresponding space of unimodular grids, and let
\[
\pi:Y\to X, \qquad \pi(\Lambda+r)=\Lambda,
\]
denote the natural projection. For $t\in\R$, define
\[
g_t=
\begin{pmatrix}
e^t&0\\
0&e^{-t}
\end{pmatrix},
\]
and for $x\in\R$, define
\[
u_x=
\begin{pmatrix}
1&-x\\
0&1
\end{pmatrix}.
\]
For $x\in\R$, define the homogeneous lattice $$\Lambda_x=u_x\Z^2=\{(p-qx,q): p, q\in\Z\}\in X.$$ For a
fixed shift $\gamma\in\R/\Z$, define the associated inhomogeneous grid
\[
\Lambda_x^\gamma = u_x(\Z^2+(\gamma,0))=\{(p+\gamma-qx,q): p, q\in\Z\}.
\]
For $\eps>0$, define
\[
\FF_\eps
=
\{
\Lambda+r\in Y:
(\Lambda+r)\cap B_\infty(0,\eps)=\emptyset
\}.
\]
Thus, $\FF_\eps$ consists of those grids whose distance from the origin is at
least $\eps$ in the sup norm.

For $\Lambda\in X$, define the first successive minimum
\[
\lambda_1(\Lambda)
=
\inf\{\|v\|_\infty:v\in\Lambda\setminus\{0\}\}.
\]
For $\delta>0$, define
\[
\mathcal K_\delta
=
\{\Lambda\in X:\lambda_1(\Lambda)\ge\delta\}.
\]

For $\Lambda+r\in Y$, define
\[
\Delta(\Lambda+r)
=
\inf_{v\in\Lambda+r}\|v\|_\infty.
\]
For $\delta>0$, define
\[
\mathcal F_\delta
=
\{\Lambda+r\in Y:\Delta(\Lambda+r)\ge\delta\}.
\]

Thus $\mathcal K_\delta$ is the compact set of lattices avoiding short
nonzero vectors and $\mathcal F_\delta$ is the corresponding compact set of
grids avoiding the origin. The notation
$\pi^{-1}(\mathcal K_\delta)$ is used for the set of grids whose
homogeneous part belongs to $\mathcal K_\delta$. Thus
\[
\Lambda+r\in \pi^{-1}(\mathcal K_\delta)
\quad\Longleftrightarrow\quad
\Lambda\in \mathcal K_\delta.
\]

For later use, if $a\in\Lambda\setminus\{0\}$, define the wedge-separation
quantity
\[
\Delta_a(\Lambda+r)
=
\inf_{b\in\Lambda+r}|a\wedge b|,
\]
where $a\wedge b=a_1b_2-a_2b_1$ is the wedge product.

The following result is the dynamical reformulation of $\Bad$ and
$\Bad^\gamma$.

\begin{lemma}[Dani correspondence]
\label{lem:dani}
For $x\in\R$, the following are equivalent:
\begin{enumerate}
\item $x\in\Bad$;
\item the orbit $\{g_t\Lambda_x:t\ge0\}$ is bounded in $X$.
\end{enumerate}
Moreover, $x\in\Bad^\gamma$ if and only if there exists $\eps>0$ such that
$g_t\Lambda_x^\gamma\in \FF_\eps$ for all sufficiently large $t\ge0$.
\end{lemma}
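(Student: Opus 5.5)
The plan is to make both equivalences explicit by passing to the lattice vectors of $\Lambda_x$ (resp.\ $\Lambda_x^\gamma$) and flowing them by $g_t$. A vector $(p-qx,q)$ of $\Lambda_x$ becomes $(e^t(p-qx),e^{-t}q)$ under $g_t$. For the homogeneous part, Mahler's compactness criterion says a subset of $X$ is bounded exactly when $\lambda_1$ is bounded below on it. So statement (2) is equivalent to the existence of $\delta>0$ with $g_t\Lambda_x\in\mathcal K_\delta$ for all $t\ge0$. The whole lemma then reduces to one computation, run in parallel for the two cases:
\[
\inf_{t\ge0}\ \inf_{v}\|g_tv\|_\infty \;\asymp\; \inf_{q\ge1}\sqrt{q\,|p-qx+\gamma'|}.
\]
Here $v$ ranges over the nonzero vectors of the lattice or the points of the grid, and $\gamma'$ is $0$ or $\gamma$.

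First I show that badly approximable implies a uniform lower bound. Suppose $q\|qx\|\ge c$ for all $q\ge1$, and take any nonzero $v=(p-qx,q)$ and any $t\ge0$.
\begin{itemize}
\item If $q=0$, then $|p|\ge1$, so $\|g_tv\|_\infty\ge e^t\ge1$.
\item If $q\ne0$, then $\max(e^t|p-qx|,\,e^{-t}|q|)\ge\sqrt{|q|\,|p-qx|}\ge\sqrt c$, since the product of the two coordinates is $|q||p-qx|$ and is independent of $t$.
\end{itemize}
Hence $\lambda_1\ge\min(1,\sqrt c)$.

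For the converse, suppose $q\|qx\|<\epsilon^2$ for some $q\ge1$. Choose $t\ge0$ with $e^{-t}q=\sqrt{q\|qx\|}$; this is possible because that quantity is at most $q$. Then both coordinates of $g_tv$ have size $\sqrt{q\|qx\|}<\epsilon$. If this happens for arbitrarily small $\epsilon$, the orbit is unbounded.

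The grid case uses the same argument with $\Lambda_x^\gamma$ and $\mathcal F_\epsilon$. The two directions of the "if and only if" go as follows.

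\textbf{From $\Bad^\gamma$ to the flow condition.} Take $c=\inf_q q\|q x-\gamma\|>0$. For $q\neq0$ the same product bound gives $\|g_tv\|_\infty\ge\sqrt c$. For $q\le0$ one applies the definition to $|q|$ via the symmetry $\|{-y}\|=\|y\|$ only after the sign bookkeeping of $\gamma$. If the paper intends $q$ to range over $\mathbb Z\setminus\{0\}$ this is harmless; otherwise I would note that the defining inequality for negative $q$ is equivalent to the one for $\Bad^{-\gamma}$ and handle it identically. For $q=0$ the point is $(p+\gamma,0)$, and $|p+\gamma|\ge\|\gamma\|$, which is positive unless $\gamma=0$. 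When $\gamma=0$ the point $v=0$ lies in the grid, and $\Bad^0=\Bad$; so I would treat $\gamma=0$ by the homogeneous statement and assume $\gamma\ne0$. Then $e^t|p+\gamma|\ge\|\gamma\|$, so $g_t\Lambda_x^\gamma\in\FF_\epsilon$ for all $t\ge0$ with $\epsilon=\min(\|\gamma\|,\sqrt c)$.

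\textbf{From the flow condition to $\Bad^\gamma$.} Suppose $g_t\Lambda_x^\gamma\in\FF_\epsilon$ for all $t\ge T$. Take $q$ with $q\|qx-\gamma\|<\epsilon^2e^{-2T}$, and choose $t$ with $e^{-t}q=\sqrt{q\|qx-\gamma\|}$, so that both coordinates of $g_tv$ are below $\epsilon$.
\begin{itemize}
\item If $t\ge T$, this contradicts membership in $\FF_\epsilon$.
\item If $t<T$, then $q<e^T\sqrt{q\|qx-\gamma\|}$, so $q<e^{2T}\|qx-\gamma\|\le e^{2T}/2$. That is, $q$ lies in a finite set.
\end{itemize}
So $q\|qx-\gamma\|$ is bounded below for all large $q$. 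For the finitely many small $q$ we need $\|qx-\gamma\|\ne0$. If it were $0$, some point of the grid would have first coordinate $0$, and its image under $g_t$ would be $(0,e^{-t}q)$, which tends to $0$ as $t\to\infty$; this contradicts the hypothesis. Hence the infimum is positive.

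The main obstacle is the $t<T$ case together with the exact-hit case in this last step. The finiteness argument handles both cleanly. The rest is the standard Dani correspondence, which I would cite from \cite{EinsiedlerTseng} for the homogeneous half.
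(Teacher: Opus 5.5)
Your strategy is the same as the paper's. The paper's proof is the single remark that minimizing the sup norm of $(e^t(p+\gamma-qx),e^{-t}q)$ over $t$ gives the equivalence. Your balancing-time computation (the product of the two coordinates is $t$-invariant), together with Mahler's criterion for the homogeneous half, is a detailed version of that remark.

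Your proof that the flow condition implies $x\in\Bad^\gamma$ is correct. It handles the finitely many $q$ with $t<T$ and the exact-hit case, and it uses only grid points with $q\ge1$. This is also the only direction the paper uses later, in Section~\ref{sec:proof}, to conclude $x_\infty\in\Bad^\gamma$. One small omission: in the homogeneous converse, for rational $x$ you have $\normZ{qx}=0$, so no finite balancing time exists. There you should use $g_t(0,q)\to0$ instead, as you did in the inhomogeneous case.

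The gap is in the forward inhomogeneous direction, at the grid points with $q=-n\le-1$.
\begin{enumerate}
\item Your first alternative, letting $q$ range over $\Z\setminus\{0\}$, amounts to the two-sided definition $\inf_{q\ne0}$. That is not the paper's definition of $\Bad^\gamma$.
\item Your second alternative, ``handle it identically'', fails. The point $(p+\gamma+nx,-n)$ has balanced size $\sqrt{n\normZ{nx+\gamma}}$. That quantity is governed by $\Bad^{-\gamma}$, and $x\in\Bad^\gamma$ says nothing about it.
\end{enumerate}

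In fact the implication is false as stated. Take $\gamma$ badly approximable with constant $c$, and set $x=-\gamma$.
\begin{enumerate}
\item For all $q\ge1$ we have $q\normZ{qx-\gamma}=q\normZ{(q+1)\gamma}\ge\frac{q}{q+1}\,c\ge c/2$, so $x\in\Bad^\gamma$.
\item On the other hand, $\Lambda_x^\gamma=\{(p+(q+1)\gamma,q)\}$ contains $(0,-1)$.
\item Since $g_t(0,-1)=(0,-e^{-t})\to0$, no condition $g_t\Lambda_x^\gamma\in\FF_\eps$ can hold for all large $t$.
\end{enumerate}

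So the ``moreover'' clause has to be modified before it can be proved. There are two natural fixes.
\begin{enumerate}
\item Require only the points of $g_t\Lambda_x^\gamma$ with positive second coordinate to avoid $B_\infty(0,\eps)$. Then your argument, restricted to $q\ge1$, proves the equivalence, and the exception $\gamma=0$ disappears as well.
\item Replace $\Bad^\gamma$ by $\{x:\inf_{q\ne0}|q|\normZ{qx-\gamma}>0\}=\Bad^\gamma\cap\Bad^{-\gamma}$, and keep the assumption $\gamma\ne0$.
\end{enumerate}
The paper's one-line proof passes over the same sign issue.
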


\begin{proof}
The homogeneous statement is the classical Dani correspondence \cite{Dani4}.
For the inhomogeneous statement, a point of $g_t\Lambda_x^\gamma$ has the
form
\[
(e^t(p+\gamma-qx),e^{-t}q), \qquad (p,q)\in\Z^2.
\]
Minimizing the sup norm in $t$ shows that the orbit avoids a neighbourhood
of the origin if and only if $q\normZ{qx-\gamma}$ is bounded away from
zero.
\end{proof}

We also require a compactness statement.

\begin{lemma}[Lattice minimum]
\label{lem:lattice-minimum}
For every $\delta>0$ there exist constants $0<c_0<C_0<\infty$ such that
every $\Lambda\in\mathcal K_\delta$ contains a primitive vector
$a=(a_1,a_2)$ satisfying
\[
c_0\le |a_2|\le C_0, \qquad
\left|\frac{a_1}{a_2}\right| \le \frac12.
\]
\end{lemma}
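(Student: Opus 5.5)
The plan is to use Minkowski's convex body theorem on a thin, tall box, and then let the hypothesis $\lambda_1(\Lambda)\ge\delta$ force the vertical coordinate of the resulting vector to be bounded below. The constants will be explicit: $c_0=\delta$ and $C_0=2/\delta$. If $\mathcal K_\delta$ is empty, which happens when $\delta>1$ by Minkowski, there is nothing to prove, so assume $0<\delta\le 1$.

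\emph{Step 1: find a short vector in a thin box.} Fix $\Lambda\in\mathcal K_\delta$ and consider the compact convex symmetric box
\[
K=\{(x_1,x_2)\in\R^2:\ |x_1|\le \delta/2,\ |x_2|\le 2/\delta\}.
\]
Its area is $4\cdot(\delta/2)\cdot(2/\delta)=4=2^2\operatorname{covol}(\Lambda)$. The compact form of Minkowski's theorem (area $\ge 2^2$ times the covolume suffices for compact bodies) gives a nonzero vector $a=(a_1,a_2)\in\Lambda\cap K$.

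\emph{Step 2: bound $|a_2|$ below and control the slope.} Since $a\neq0$ we have $\|a\|_\infty\ge\lambda_1(\Lambda)\ge\delta$. But $|a_1|\le\delta/2<\delta$, so the sup norm must be attained by the second coordinate, giving $|a_2|\ge\delta$. Consequently
\[
\left|\frac{a_1}{a_2}\right|\le \frac{\delta/2}{\delta}=\frac12,
\]
and we also have $\delta\le|a_2|\le 2/\delta$.

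\emph{Step 3: pass to a primitive vector.} Write $a=k a'$ with $a'\in\Lambda$ primitive and $k\ge1$ an integer. Then $a'\in K$ as well, since $|a'_1|\le|a_1|$ and $|a'_2|\le|a_2|$. The argument of Step 2 therefore applies verbatim to $a'$, yielding $\delta\le|a'_2|\le 2/\delta$ and $|a'_1/a'_2|\le 1/2$. (The slope is also unchanged by scaling.) This proves the lemma with $c_0=\delta$ and $C_0=2/\delta$.

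The only step that needs care is Step 2. The box must be chosen thin enough, with horizontal half-width strictly below $\delta$, that the lower bound $\lambda_1(\Lambda)\ge\delta$ cannot be realised by the horizontal coordinate. That is what simultaneously gives the lower bound on $|a_2|$ and the slope bound $\le 1/2$.
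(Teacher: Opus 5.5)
Your argument is correct and complete, but it is not the paper's route. The paper gives no self-contained proof. It defers to the construction in Hatefi--Simmons (Section~4.2.3), where the primitive vector is chosen uniformly over $\mathcal K_\delta$ by appealing to compactness of that family, so $c_0$ and $C_0$ arise only as existence constants.

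You instead apply Minkowski's convex body theorem to the box $\{|x_1|\le\delta/2,\ |x_2|\le 2/\delta\}$ of area $4$, whose shape is adapted to the slope condition. Since $\lambda_1(\Lambda)\ge\delta$ while $|a_1|\le\delta/2$, the sup norm of the nonzero lattice point is carried by the second coordinate. This yields $|a_2|\ge\delta$ and $|a_1/a_2|\le\frac12$ at once, and dividing by the index $k$ keeps the primitive vector in the same box.

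Your route gives a short, self-contained proof with explicit constants $c_0=\delta$, $C_0=2/\delta$, using no reduction theory or Mahler compactness. Note that $c_0<C_0$ holds because $\delta\le 1$. These explicit values could be fed directly into the quantities $A_\delta$, $B_\delta$ and $\theta(\delta)$ in Lemma~\ref{lem:homogeneous}. The compactness route matches the paper's general habit of extracting uniform constants over compact families, but gives no quantitative dependence on $\delta$.
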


\begin{proof}
The proof is identical to the construction in
\cite[Section~4.2.3]{HatefiSimmons2024}, where the required primitive
vector is selected uniformly over the compact family
$\mathcal K_\delta$.
\end{proof}
\ignore{
By Mahler's compactness criterion, 
\[
\mathcal K_\delta = \{\Lambda\in X:\lambda_1(\Lambda)\ge\delta\}
\] is compact. For each $\Lambda\in\mathcal K_\delta$, the Gauss reduction algorithm \comdavid{I still don't know what you mean by ``Gauss reduction algorithm''.} 
produces a primitive vector $\aa=(a_1,a_2)\in\Lambda$ \comdavid{I changed the notation so that vectors are bold} such that
\[
\left|\frac{a_1}{a_2}\right|\le \frac12.
\]
Since $\lambda_1(\Lambda)\ge\delta$, we have $\|\aa\|_\infty\ge\delta$.
Because $|a_1|\le \tfrac12|a_2|$, it follows that $\|\aa\|_\infty=|a_2|$,
and therefore $
|a_2|\ge\delta.$ 

For the upper bound, by Minkowski's second theorem (Cassels, Chapter III, Section 2), we have
\[
\lambda_1(\Lambda)\lambda_2(\Lambda)\le 1.
\]
Since $\lambda_1(\Lambda)\ge\delta$, this gives $\lambda_2(\Lambda)\le\delta^{-1}$.
The vector $\aa$ from the Gauss-reduced basis satisfies $\|\aa\|_\infty \le
\lambda_2(\Lambda)$ up to a universal constant (see \cite[Section 4.2.3,
Condition (IV)]{HatefiSimmons2024}). Hence there exists a universal
constant $C>0$ such that
\[
\|\aa\|_\infty \le C\delta^{-1}.
\]
Since $\|\aa\|_\infty=|a_2|$, taking $C_0:=C\delta^{-1}$ gives $
|a_2|\le C_0.$ Thus, with $c_0=\delta$ and $C_0=C\delta^{-1}$, we have
\[
c_0\le |a_2|\le C_0,
\qquad
\left|\frac{a_1}{a_2}\right|\le \frac12,
\]
as required.
}

\subsection{Game Balls and Grid States}
\label{subsec:grid-states}

We introduce a correspondence between balls played in the rapid game and grids in $Y$. Namely, each choice of ball $B(x,\rho)$ in the rapid game will correspond to a choice of grid $\Lambda^\gamma := g_t u_x(\Z^2 + (\gamma,0))$, where $t = -\frac12 \log(\rho)$. To be precise, suppose Bob's current ball is
\[
B=B(x_0,\rho), \qquad \rho=e^{-2t}.
\]
The corresponding homogeneous and inhomogeneous states are
\[
\Lambda=g_tu_{x_0}\Z^2, \qquad
\Lambda^\gamma = g_tu_{x_0}(\Z^2+(\gamma,0)).
\]

Suppose Bob announces the parameter $\alpha=e^{-2s}$. If Alice chooses a
centre of the form $y=x_0+e^{-2t}z$, then the corresponding homogeneous
and inhomogeneous states become
\[
g_{t+s}u_y\Z^2 = g_su_z\Lambda,
\qquad
g_{t+s}u_y(\Z^2+(\gamma,0)) = g_su_z\Lambda^\gamma.
\]
Thus, Alice's choice of centre is equivalent to choosing a horocycle
parameter $z$. 
The legality condition is $B(z,\alpha)\subseteq[-1,1]$, which guarantees
that Alice's move remains inside Bob's ball.

\begin{remark}
In the strong game, Bob may choose radii larger than the model values. This
changes the corresponding flow times only by bounded amounts. All
compactness, avoidance, and approximation estimates below are uniform under
such bounded perturbations. This is precisely the role of the strong
formulation in the proof.
\end{remark}







\section{Proof of  Theorem\ref{thm:main}}
\label{sec:proof}
The proof follows the same two-strategy framework as in \cite{HatefiSimmons2024}: Alice alternates between a \emph{default strategy} and an \emph{auxiliary strategy}. The key difference is that the auxiliary strategy is now used only to force a homogeneous $\psi$-approximation, while the return to a bounded region is reserved for the default strategy. This separation is precisely what allows the game to recover the Jarn\'ik--Besicovitch dimension.

\subsection{The Default Strategy}
\label{subsec:default}

Alice's default strategy provides a universal response to every Bob move.  Its primary functions are twofold:
\begin{enumerate}
\item to keep the homogeneous lattice within a bounded, compact region $\mathcal K_\theta$ (or to actively recover it from a deep cusp corridor $\mathcal K_\theta$ back to the universal compact set $\mathcal K_\zeta$); and
\item to maintain the inhomogeneous grid uniformly away from the origin, i.e. inside $\mathcal F_{\zeta'}$.
\end{enumerate}

The latter condition guarantees that every possible outcome of the game lies in $\Bad^\gamma$.

\begin{lemma}
\label{lem:homogeneous}
Fix $0<\alpha<1/4$ and $0<\beta<1$. There exists a strategy for Alice in
the $(\alpha,\beta)$-strong game such that for every $\delta>0$ there is
$\theta=\theta(\delta)>0$ with the following property: if the homogeneous
part starts in $\mathcal K_\delta$, then after finitely many rounds it
enters and remains in $\mathcal K_\zeta$, where $\zeta>0$ is universal
(independent of $\delta$). During this process the homogeneous part stays
in $\mathcal K_\theta$.
\end{lemma}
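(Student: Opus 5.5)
Via the Dani correspondence (Lemma~\ref{lem:dani}), Bob's ball $B(x_0,\rho)$ corresponds to $\Lambda=g_tu_{x_0}\Z^2$ with $\rho=e^{-2t}$, and Alice's move corresponds to a horocycle parameter $z\in[-1,1]$ with $B(z,\alpha)\subseteq[-1,1]$. Each round therefore advances the flow by $s=\frac12\log(1/\alpha_n)$ plus a bounded amount coming from $\beta$ and Bob's strong-game freedom. I will use the elementary fact that the first minimum can shrink only boundedly over a bounded time interval: $\lambda_1(g_\tau\Lambda)\ge e^{-|\tau|}\lambda_1(\Lambda)$. The plan is to prescribe, at each round, a choice of $z$ that neutralizes the currently shortest vector. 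This choice depends only on the current lattice, so a single strategy works for every $\delta$, as the lemma requires.

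\textbf{Step 1 (the move).} Take $a$ to be a shortest nonzero vector of the current lattice, so $\|a\|_\infty=\lambda_1(\Lambda)$. Since $g_s u_z a=(e^s(a_1-za_2),e^{-s}a_2)$, the danger is a vector with $|a_1-za_2|$ small while $a_2$ is small. Alice should choose $z$ with $|a_1-za_2|\ge c|a_2|$ whenever $a_2\ne0$. This is possible because the forbidden set for $z$ is an interval of length $2c$, and since $\alpha<1/4$ the admissible centres form an interval of length $2-2\alpha>3/2$, so a legal $z$ always exists. If $a_1$ is much larger than $a_2$, or if $a_2=0$, then $a$ is expanded regardless of $z$. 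By discreteness and Minkowski's theorem, only $a$ and its multiples can be shorter than roughly $1/\lambda_1(\Lambda)$, so the other vectors are not a concern when $\lambda_1$ is small.

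\textbf{Step 2 (escape estimate).} Suppose $\lambda_1(\Lambda)=\epsilon$ is small. After the move, $g_su_za$ has first coordinate of size at least $e^{s}c\epsilon$ in the typical case $|a_2|\approx\epsilon$. Since $e^{s}\ge\alpha^{-1/2}>2$, each round multiplies the length of the short vector by a factor $\kappa>1$. The multiplication continues until that length reaches a universal threshold $\zeta$, which depends only on $\alpha$ and $\beta$. The number of rounds needed is therefore at most about $\log(\zeta/\delta)/\log\kappa$. During this process, each round's bounded perturbation, together with the bounded strong-game slack, loses at most a factor $C(\alpha,\beta)$ in $\lambda_1$. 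This gives $\theta(\delta)=\delta/C$.

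\textbf{Step 3 (staying in $\mathcal K_\zeta$).} Once $\lambda_1\ge\zeta$, the same rule keeps $\lambda_1\ge\zeta$, provided $\zeta$ is chosen small relative to the constants. The reason is that any vector that becomes short within one round was previously of bounded length with $|a_2|$ bounded below, as Lemma~\ref{lem:lattice-minimum} guarantees on $\mathcal K_\zeta$. Alice's choice of $z$ keeps this vector's first coordinate away from zero, so the vector is expanded. Only finitely many such candidate vectors, boundedly many, need to be considered.

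\textbf{Main obstacle.} The hardest part is controlling Bob's variable $\alpha_n$. Very small $\alpha_n$ means a large flow time $s$. Over a long time, a single avoidance choice of $z$ must protect against all candidate vectors, not only $a$. In the homogeneous case this reduces to a one-parameter exclusion, because in $\R^2$ a second independent short vector cannot exist simultaneously, so excluding a $c$-neighbourhood of the single slope $a_1/a_2$ suffices. I would first verify this carefully using $\lambda_1\lambda_2\asymp1$ and the fact that over long times only vectors with $a_1/a_2$ near $z$ contract.
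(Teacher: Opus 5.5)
For the lemma as stated the contraction is fixed. One round is therefore $M=g_{s'}u_{y'}g_{s_A}u_z$ with $e^{s_A}=\alpha^{-1/2}$, $|z|\le 1-\alpha$, $|y'|\le 1$ and $0\le s'\le\frac12\log(1/\beta)$. These matrices form a compact family, so $\|M^{-1}w\|_\infty\le L\|w\|_\infty$ for some $L=L(\alpha,\beta)$. In this setting your route proves the lemma and differs from the paper's: re-select a shortest vector $a$ each round and choose $z$ to expand it. Two steps need repair.

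\emph{(i) The choice of $z$.} A $z$ that is merely $c$-separated from $a_1/a_2$ with $c\le 3/4$ need not expand $a$. Bob's shear subtracts up to $\alpha^{1/2}|a_2|$. When $|a_1|=\|a\|_\infty$ and $|a_2|$ is comparable to $|a_1|$, the net factor can drop below $1$ for $\alpha$ near $1/4$; for example $\alpha=0.24$, $|a_2|\approx 0.66|a_1|$, $z=1-\alpha$ gives a factor of about $0.69$. Instead take $z=\pm(1-\alpha)$ with sign opposite to that of $a_1a_2$. Then $|a_1-za_2|=|a_1|+(1-\alpha)|a_2|$, and the first coordinate of $Ma$ is at least $\alpha^{-1/2}|a_1|+\alpha^{-1/2}(1-2\alpha)|a_2|$. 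Hence $\|Ma\|_\infty\ge\kappa\|a\|_\infty$ with $\kappa=\alpha^{-1/2}(1-2\alpha)$, and $\kappa>1$ precisely because $\alpha<1/4$.

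\emph{(ii) The mechanism in Step~3.} Alice's rule protects a single vector, so ``boundedly many candidates'' would not suffice, and Lemma~\ref{lem:lattice-minimum} plays no role. What holds is that there is only one candidate. Put $\zeta=2^{-1/2}L^{-1}$. Suppose $\|Mv\|_\infty<\min(\kappa\lambda_1(\Lambda),\zeta)$ for some $v\in\Lambda\setminus\{0\}$. Then $\|a\|_\infty\le\|v\|_\infty<2^{-1/2}$, so $|a\wedge v|\le 2\|a\|_\infty\|v\|_\infty<1$. This forces $v\in\mathbb Z a$, contradicting $\|Ma\|_\infty\ge\kappa\lambda_1(\Lambda)$. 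Hence $\lambda_1(M\Lambda)\ge\min(\kappa\lambda_1(\Lambda),\zeta)$. This gives entry into $\mathcal K_\zeta$ after about $\log(\zeta/\delta)/\log\kappa$ rounds and invariance afterwards. It also gives $\theta=\min(\delta,\zeta)$ at game states, up to a bounded factor within a round.

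\emph{Comparison with the paper.} The paper takes the cone vector of Lemma~\ref{lem:lattice-minimum}, has Alice always play $z=\alpha-1$, and proves only a one-round statement. A lower bound on $\|Ma\|_\infty$, the bound $\|M\|\le L$, and the wedge bound for vectors independent of $a$ give $M\Lambda\in\mathcal K_{\theta(\delta)}$. The growth to $\zeta=\frac12\alpha\beta$ and its invariance are quoted from Hatefi--Simmons. That explicit move does not depend on $\Lambda$, so it cannot force growth by itself: for $\beta\le 1/(2-\alpha)$, Bob can keep a fixed rational at the centre of every Alice ball. Your adaptive choice is exactly the ingredient the paper outsources, and with (i) and (ii) your argument is self-contained.

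\emph{Your ``main obstacle''.} It does not arise for this lemma, and the resolution you propose is false. At each instant there is at most one short primitive vector, but over a long flow different vectors become short at different times. Excluding a neighbourhood of the slope $a_1/a_2$ does not exclude $z=v_1/v_2$ for another $v\in\Lambda$, and then $g_\tau u_z v=(0,e^{-\tau}v_2)$ becomes arbitrarily short. Your bound $\theta=\delta/C$ likewise presupposes bounded flow time. If Bob-chosen $\alpha_n\to 0$ had to be handled, Alice would need a $z$ whose whole forward orbit $g_\tau u_z\Lambda$ stays bounded. Alternatively, she could use the strong-game freedom $\rho'_n\ge\alpha_n\rho_n$ to keep her contraction fixed.
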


\begin{proof} Let $\Lambda\in\mathcal K_\delta$. By Lemma~\ref{lem:lattice-minimum}, there exists a primitive vector $\mathbf a=(a_1,a_2)\in\Lambda$ satisfying \[ c_0(\delta)\le |a_2|\le C_0(\delta), \qquad |a_1|\le\frac12|a_2|. \] Alice plays the maximal legal horocycle translation \[ x=\alpha-1 \] followed by the diagonal flow $ t=-\frac12\log\alpha. $  Thus \[ g_tu_{\alpha-1}\mathbf a = \begin{pmatrix} \alpha^{-1/2}(a_1+(1-\alpha)a_2)\\ \alpha^{1/2}a_2 \end{pmatrix}. \] Now let Bob respond by applying $u_y$ followed by $g_s$, where $|y|\le1$ and $0\le s\le-\log\beta$. Writing \[ M=g_su_yg_tu_{\alpha-1}, \] we obtain \[ M\mathbf a = \begin{pmatrix} e^{s}\alpha^{-1/2} \bigl(a_1+(1-\alpha-\alpha y)a_2\bigr)\\ e^{-s}\alpha^{1/2}a_2 \end{pmatrix}. \] Since $|y|\le1$ and $|a_1|\le\frac12|a_2|$, \[ |a_1+(1-\alpha-\alpha y)a_2| \ge \left(\frac12-2\alpha\right)|a_2|. \] As $\alpha<1/4$, the constant $c_\alpha:=\frac12-2\alpha $ is positive. Using also $e^{s}\ge1$ and $|a_2|\ge c_0(\delta)$ gives \[ \|M\mathbf a\|_\infty \ge \alpha^{-1/2}c_\alpha c_0(\delta) =:A_\delta. \] 

On the other hand, every admissible one-step matrix belongs to the compact family \[ \mathscr M = \{\,g_su_yg_tu_{\alpha-1} : |y|\le1,\; 0\le s\le-\log\beta\,\}. \] Hence there exists a constant $L=L(\alpha,\beta) $ such that \[ \|M\| \le L \qquad (M\in\mathscr M). \] Since $ \|\mathbf a\|_\infty \le C_0(\delta),$  we obtain \[ \|M\mathbf a\|_\infty \le LC_0(\delta) =:B_\delta. \] Now let $\mathbf b\in\Lambda$ be any primitive vector independent of $\mathbf a$. Since $ |\det(\mathbf a,\mathbf b)| \geq 1 $ and $\det(M)=1$, \[ 1 \leq |\det(M\mathbf a,M\mathbf b)| \le 2\|M\mathbf a\|_\infty \|M\mathbf b\|_\infty. \] Using the upper bound on $\|M\mathbf a\|_\infty$ yields \[ \|M\mathbf b\|_\infty \ge \frac1{2B_\delta}. \] Therefore every primitive lattice vector of the transformed lattice $M\Lambda$ has norm at least
\[ \min\!\left( A_\delta,\, \frac1{2B_\delta} \right). \] 
Consequently the transformed lattice remains in $\mathcal K_{\theta(\delta)}$, where 
\[ \theta(\delta) = \min\!\left( A_\delta,\, \frac1{2B_\delta} \right). \] 
As in the proof of \cite[Lemma~4.5]{HatefiSimmons2024}, the shortest primitive lattice vector grows until it reaches a universal constant $ \zeta = \frac12\alpha\beta,$ after which it cannot decrease below $\zeta$. Hence after finitely many steps the homogeneous trajectory enters $\mathcal K_\zeta$ and remains there, while throughout the recovery process it stays inside the compact corridor $\mathcal K_{\theta(\delta)}$.
\end{proof}

\begin{lemma}
\label{lem:inhomogeneous}
Assume that the grid state is in $\pi^{-1}(\mathcal K_\delta)\cap\mathcal F_\delta$.
With the same $\alpha,\beta$ as above, there is a strategy for the strong $(\alpha,\beta)$ game taking the grid state to
$\pi^{-1}(\mathcal K_\zeta)\cap\mathcal F_{\zeta'}$ while staying in
$\pi^{-1}(\mathcal K_\theta)\cap\mathcal F_{\theta'}$, where $\theta,\theta'$
depend on $\delta$, and $\zeta,\zeta'$ are universal.
\end{lemma}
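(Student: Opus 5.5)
Since the homogeneous part is controlled by Lemma~\ref{lem:homogeneous}, it remains to handle the inhomogeneous coordinate. The plan is to run the strategy of Lemma~\ref{lem:homogeneous} on $\pi(\Lambda+r)$, and at each round use the remaining freedom in Alice's move to push the grid away from the origin. Alice's move is a choice of horocycle parameter $z$ together with the flow $g_s$, $\alpha=e^{-2s}$. Up to one round of compact bounded distortion, the one-step matrices range over the compact family $\mathscr M$ from the proof of Lemma~\ref{lem:homogeneous}. Hence it suffices to show the following: for a grid $\Lambda+r$ with $\Lambda\in\mathcal K_\eta$ and $\Delta(\Lambda+r)\ge\eta'$, Alice has a legal $z$ such that, for every Bob reply $M\in\mathscr M$, we have $\Delta(M(\Lambda+r))\ge\eta''$ with $\eta''$ depending only on $\eta,\eta'$. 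Moreover $\eta''$ can be taken to be a universal constant $\zeta'$ once $\eta\ge\zeta$.

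\textbf{Step 1 (wedge separation).} Fix the primitive vector $\mathbf a$ given by Lemma~\ref{lem:lattice-minimum} for $\Lambda\in\mathcal K_\eta$. Every $b\in\Lambda+r$ lies on one of the lines $b\in \mathbb Z\mathbf a+ r+k\mathbf b$, and $|\mathbf a\wedge b|$ takes values in a coset of $\mathbb Z$. The grid avoids the origin uniformly exactly when it avoids a thin strip around the line $\mathbb R\mathbf a$. Since $\det M=1$, the quantity $\Delta_{\mathbf a}$ is $M$-invariant, $\Delta_{M\mathbf a}(M(\Lambda+r))=\Delta_{\mathbf a}(\Lambda+r)$. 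Also $\|Mb\|_\infty\ge |M\mathbf a\wedge Mb|/(2\|M\mathbf a\|_\infty)$, and $\|M\mathbf a\|_\infty\le B_\eta$ by the bound from Lemma~\ref{lem:homogeneous}. So any grid point that is off the line through $\mathbf a$ stays at distance $\gtrsim \Delta_{\mathbf a}/B_\eta$ from the origin.

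\textbf{Step 2 (points on the $\mathbf a$-line).} The only points that could approach $0$ are those in $\mathbb R\mathbf a\cap(\Lambda+r)$, which exist only if $\Delta_{\mathbf a}=0$. These form a translate $c+\mathbb Z\mathbf a$. Here I use the freedom in $z$. The expanding direction of $g_s u_z$ sends $\mathbf a$ to a vector whose first coordinate is $a_1+(\cdot)a_2$, bounded below as in Lemma~\ref{lem:homogeneous}. The points $c+k\mathbf a$ are therefore pushed along a line of slope $\asymp\alpha$ that stays away from $0$, except possibly for the one $k$ with $c+k\mathbf a$ closest to $0$. That point already has norm $\ge\eta'$ by hypothesis, and a one-step matrix from the compact family $\mathscr M$ cannot shrink it below $\eta'/L$. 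Combining Steps 1 and 2 gives $\theta'=\min(\eta'/L,\ \Delta_{\mathbf a}/(2B_\eta), c_\alpha c_0)$ while the trajectory passes through the corridor $\mathcal K_\theta$.

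\textbf{Step 3 (universal constant).} After the homogeneous part has entered $\mathcal K_\zeta$, rerun Steps 1--2 with $\eta=\zeta$. Alice now chooses $z$ from two or three well-separated candidate values, namely the endpoints $\pm(1-\alpha)$ and the midpoint. She picks the one maximizing the minimum over $M$, which gives a bound depending only on $\alpha,\beta$, hence a universal $\zeta'$. The choice is a pigeonhole argument: a short grid vector near $0$ can be dangerous for at most one of the separated choices of $z$, because of the shearing.

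\textbf{Main obstacle.} The hardest step is the pigeonhole in Step 3. The danger is that the lower bound $\zeta'$ inherits $\Delta_{\mathbf a}$, which may be tiny, so I must check that the shearing argument alone gives a universal bound, uniform over the compact set $\pi^{-1}(\mathcal K_\zeta)$. If that fails, I would instead follow \cite[Lemma~4.6]{HatefiSimmons2024}: show that $\Delta$ increases by a fixed factor each round until it reaches $\zeta'$, mirroring the shortest-vector growth argument at the end of the proof of Lemma~\ref{lem:homogeneous}.
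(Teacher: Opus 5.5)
Your self-contained argument has a genuine gap: the reduction to a single-round statement cannot prove the lemma. The admissible one-round matrices form a compact family. With $L=\sup\|M\|$ and $L'=\sup\|M^{-1}\|$, one has $\Delta(\Lambda+r)/L'\le\Delta(M(\Lambda+r))\le L\,\Delta(\Lambda+r)$ for \emph{every} choice of $z$.

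The left inequality already gives the one-round bound ``$\eta''$ depending only on $\eta,\eta'$'', so Steps~1--2 add nothing. Your $\theta'$ is in fact worse. It contains $\Delta_{\mathbf a}$, which can be positive and arbitrarily small, is not controlled by $\eta,\eta'$, and refers to a vector $\mathbf a$ that changes from round to round. So the split into $\Delta_{\mathbf a}=0$ versus $\Delta_{\mathbf a}>0$ is the wrong dichotomy.

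The right inequality refutes Step~3. Take $\Lambda=\mathbb Z^2\in\mathcal K_\zeta$ and $r=(\epsilon,0)$. The grid point $(\epsilon,0)$ is fixed by every horocycle map and only stretched by the flow. Hence after any round $\Delta\le(\alpha\beta)^{-1/2}\epsilon$, whichever candidate $z$ Alice picks, so no pigeonhole yields a universal $\zeta'$ in one round.

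Finally, iterating $\eta'\mapsto\eta'/L'$ gives $\eta'L'^{-k}\to0$, which controls only boundedly many rounds. The real obstruction is therefore not that $\Delta_{\mathbf a}$ may be tiny. It is that $\Delta$ moves by at most a bounded factor per round.

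What is missing is the multi-round argument that the paper imports by citing \cite[Lemma~4.6]{HatefiSimmons2024}; your last sentence names it but does not supply it. It has four parts.

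(i) During the homogeneous recovery of Lemma~\ref{lem:homogeneous}, which lasts a number of rounds $N(\delta)$ bounded in terms of $\delta$, the trivial bound gives $\theta'(\delta)=\delta L'^{-N(\delta)}$.

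(ii) Gain. Once $\Lambda\in\mathcal K_\zeta$ and $\Delta=\epsilon$ is small, let $b$ be a shortest grid vector. Every other grid point differs from $b$ by a nonzero lattice vector, so it has norm $\ge\zeta-\epsilon$ and stays $\gtrsim\zeta$ after the round. If $B(z,\alpha)$ keeps distance $\gtrsim1$ from $b_1/b_2$, then $|b_1-z''b_2|\gtrsim\epsilon$ for all $z''\in B(z,\alpha)$. Hence $\|Mb\|_\infty\gtrsim\alpha^{-1/2}\epsilon$, a fixed multiplicative gain (for small $\alpha$, or over a bounded block of rounds). So $\Delta$ reaches a universal level after $O(\log(1/\epsilon))$ rounds.

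(iii) Maintenance. Once $\Delta\ge\zeta'$ with $\zeta'\ll\zeta$, at most one grid point can enter $B_\infty(0,\zeta')$ in the next round. That point has $|b_2|\gtrsim\zeta'$, so its bad set of $z$ is an interval of length $O(\alpha^{1/2})$.

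(iv) Compatibility. Lemma~\ref{lem:homogeneous} prescribes $z=\alpha-1$, so there is no ``remaining freedom'' in $z$. You must redo the homogeneous control with $z$ chosen among three well-separated candidates. For $\zeta$ small, at most one lattice direction and one grid point are dangerous, and each excludes at most one candidate.

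Your shearing/pigeonhole idea in Step~3 is the right mechanism for (ii)--(iv). It has to be used to prove a multiplicative gain and an invariance, not a one-round universal bound.
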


The proof is exactly the same as in \cite[Lemma 4.6]{HatefiSimmons2024}, therefore, omitted.

\begin{lemma}\label{lem:default-recovery}
There exists a universal $\zeta>0$ such that for every $\delta>0$ there are
$\theta=\theta(\delta)>0$ and $\theta' = \theta'(\delta) > 0$ with the following property. If the homogeneous
state lies in $\mathcal K_\theta$ and the inhomogeneous state lies in
$\mathcal F_{\theta'}$, then the default strategy returns the homogeneous
state to $\mathcal K_\zeta$ and keeps the inhomogeneous state in
$\mathcal F_{\zeta'}$.
\end{lemma}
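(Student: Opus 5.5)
The plan is to obtain Lemma~\ref{lem:default-recovery} by concatenating Lemma~\ref{lem:homogeneous} and Lemma~\ref{lem:inhomogeneous}. The one point needing care is the quantifiers: the lemma hands us starting constants $\theta,\theta'$ and must produce $\zeta$ (and $\zeta'$) universal. So the first step is to reinterpret the hypotheses in terms of the input parameter of the earlier lemmas. Given $\delta>0$, we let $\theta(\delta)$ and $\theta'(\delta)$ be the corridor constants supplied by Lemmas~\ref{lem:homogeneous} and~\ref{lem:inhomogeneous}. We then observe that their conclusions only use the starting condition in the weak form
\[
\Lambda\in\mathcal K_{\delta_0},\qquad \Lambda+r\in\mathcal F_{\delta_0},\qquad \delta_0:=\min(\theta,\theta').
\]
We therefore apply Lemma~\ref{lem:inhomogeneous} with input parameter $\delta_0$. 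This is legitimate because $\mathcal K_\theta\subseteq\mathcal K_{\delta_0}$ and $\mathcal F_{\theta'}\subseteq\mathcal F_{\delta_0}$, both being monotone families. The output constants $\zeta=\tfrac12\alpha\beta$ and $\zeta'$ are universal, so they do not depend on $\delta_0$.

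Next we run the default strategy, namely the strategy of Lemma~\ref{lem:inhomogeneous}, whose homogeneous component is the maximal horocycle translation $x=\alpha-1$ of Lemma~\ref{lem:homogeneous}. Lemma~\ref{lem:homogeneous} gives that after finitely many rounds the lattice enters $\mathcal K_\zeta$ and stays there. Lemma~\ref{lem:inhomogeneous} gives that the grid reaches $\mathcal F_{\zeta'}$, and that during the transition it remains in $\pi^{-1}(\mathcal K_{\theta(\delta_0)})\cap\mathcal F_{\theta'(\delta_0)}$. This is a compact set, so the trajectory stays bounded and never reaches the origin throughout.

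To make "keeps the inhomogeneous state in $\mathcal F_{\zeta'}$" mean \emph{for all subsequent rounds}, we add a persistence step. Once the state lies in $\pi^{-1}(\mathcal K_\zeta)\cap\mathcal F_{\zeta'}$, we re-apply Lemma~\ref{lem:inhomogeneous} with $\delta=\min(\zeta,\zeta')$. Its corridor constants are then fixed universal numbers, and we may shrink $\zeta'$ to this corridor value once and for all. All estimates are uniform under the bounded perturbations allowed to Bob in the strong game, by the remark in \S\ref{subsec:grid-states}.

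The main obstacle is the inhomogeneous step. Pushing the lattice out of the cusp moves the grid by the same matrix $M$, and $M$ can contract some vectors by a factor of up to $L(\alpha,\beta)$. So we must verify that $\Delta(M(\Lambda+r))\ge L^{-1}\Delta(\Lambda+r)$ at each step, and that only boundedly many such steps occur before $\mathcal K_\zeta$ is reached. The number of steps is bounded in terms of $\delta$ by the growth argument in Lemma~\ref{lem:homogeneous}, which yields $\theta'(\delta)\ge L^{-N(\delta)}\delta$. After that, the wedge quantity $\Delta_a$ with respect to the short vector $a$ lets Alice avoid the finitely many grid points near the origin, as in \cite[Lemma~4.6]{HatefiSimmons2024}. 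This is what produces a universal $\zeta'$.
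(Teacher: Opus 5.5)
Your proposal is correct and is essentially the paper's argument. The paper proves Lemma~\ref{lem:default-recovery} in a few lines as Lemma~\ref{lem:homogeneous} (return of the lattice to $\mathcal K_\zeta$ inside a corridor) followed by Lemma~\ref{lem:inhomogeneous} once the lattice is in $\mathcal K_\zeta$, with $\theta,\theta'$ ``chosen large enough to accommodate the transient cusp excursions''; your reduction to the input $\delta_0=\min(\theta,\theta')$, the bound $\theta'(\delta)\ge L^{-N(\delta)}\delta$, and the persistence step make explicit what the paper leaves implicit. One claim should be dropped: identifying the homogeneous component of the strategy of Lemma~\ref{lem:inhomogeneous} with the fixed move $x=\alpha-1$ is not justified, since steering the grid away from the origin generally requires Alice to vary her horocycle parameter, and it is not needed, because Lemma~\ref{lem:inhomogeneous} already delivers $\pi^{-1}(\mathcal K_\zeta)\cap\mathcal F_{\zeta'}$ on its own.
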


\begin{proof}
This is precisely the composition of Lemmas~\ref{lem:homogeneous} and
\ref{lem:inhomogeneous}. Lemma~\ref{lem:homogeneous} gives the return from
$\mathcal K_\theta$ to $\mathcal K_\zeta$ while maintaining the homogeneous
part in a bounded corridor. Lemma~\ref{lem:inhomogeneous} (applied once the
homogeneous part is in $\mathcal K_\zeta$) guarantees that the grid
avoidance is preserved and the grid minimum reaches the universal threshold
$\zeta'$. The corridors $\theta,\theta'$ are chosen large enough to
accommodate the transient cusp excursions.
\end{proof}

\subsection{The Auxiliary Strategy}
\label{subsec:auxiliary}

The auxiliary strategy pushes the homogeneous lattice toward the cusp. It
does not return the lattice to a bounded region; that task is delegated to
the default strategy.

\begin{lemma}\label{lem:auxiliary-cusp}
Assume we are playing the $\Psi$-rapid game, and that the homogeneous game state lies in $\mathcal K_\delta$, and
that the inhomogeneous game state lies in $\mathcal F_\delta$. Suppose Bob
announces $\alpha=e^{-2s}$. Then Alice can choose a horocycle parameter
\[
z=\frac{a_1}{a_2}+\alpha,
\]
where $a=(a_1,a_2)$ is the primitive vector given by
Lemma~\ref{lem:lattice-minimum}, such that the resulting homogeneous
trajectory enters a cusp corridor $\mathcal K_\theta$, with
$\theta=\theta(\delta)$, and the inhomogeneous trajectory remains in
$\mathcal F_{\theta'}$, with $\theta'=\theta'(\delta)$.

Moreover, for every outcome point $x_\infty$ in Alice's ball one has
\[
\normZ{qx_\infty}\lessless q^{-1}\alpha,
\]
for some $q\asymp\rho^{-1/2}$.
\end{lemma}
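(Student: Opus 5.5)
The plan is to compute the effect of Alice's move directly on the primitive vector $a$ supplied by Lemma~\ref{lem:lattice-minimum}, and then argue as in Lemma~\ref{lem:homogeneous} for everything else.

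\textbf{Step 1 (legality).} Since $|a_1/a_2|\le\frac12$ and $\alpha<1/4$, the point $z=a_1/a_2+\alpha$ satisfies $|z|+\alpha\le \frac12+2\alpha<1$. Hence $B(z,\alpha)\subseteq[-1,1]$ and the move is legal in the normalised coordinates of \S\ref{subsec:grid-states}.

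\textbf{Step 2 (the key vector after Alice's move).} Recall $u_z a=(a_1-za_2,a_2)$, so
\[
u_za=(-\alpha a_2,\,a_2), \qquad g_su_za=(-\alpha^{1/2}a_2,\,\alpha^{1/2}a_2),
\]
using $e^{s}=\alpha^{-1/2}$. The new lattice therefore contains a vector of sup-norm $\alpha^{1/2}|a_2|\asymp\alpha^{1/2}$. This is the cusp entry: the smaller Bob's $\alpha$, the deeper the excursion.

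Bob's subsequent reply $u_yg_{s'}$ has $|y|\le1$ and $0\le s'\le-\log\beta$. I will handle it with the compact-family argument of Lemma~\ref{lem:homogeneous}. That argument shows every other primitive vector has norm at least $\gg 1/(\alpha^{1/2}C_0)$, and this vector stays comparably short. So the trajectory enters the cusp and is controlled by a corridor depending only on $\delta$ and $\alpha$. I read $\mathcal K_\theta$ as this corridor, from which Lemma~\ref{lem:default-recovery} later recovers.

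\textbf{Step 3 (Diophantine output).} Untwist the coordinates. The original lattice is $\Lambda=g_tu_{x_0}\Z^2$ with $e^{-2t}=\rho$, so $a=g_tu_{x_0}(p,q)$ for some integer vector $(p,q)$, and
\[
a=\bigl(e^t(p-qx_0),\,e^{-t}q\bigr).
\]
From $|a_2|\asymp1$ we get $q\asymp e^{t}=\rho^{-1/2}$.

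Let $y=x_0+\rho z$ be Alice's centre, and let $x_\infty$ be any point of Alice's ball, so $x_\infty=x_0+\rho(z+w)$ with $|w|\le\alpha$. Then
\[
p-qx_\infty=(p-qx_0)-q\rho(z+w)=e^{-t}a_1-e^{-t}a_2(z+w)=-e^{-t}a_2(\alpha+w).
\]
Hence $\normZ{qx_\infty}\le 2\alpha e^{-t}|a_2|\lessless q^{-1}\alpha$. The strong-game perturbations change $t$ only by a bounded amount, so the implied constants stay uniform.

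\textbf{Step 4 (inhomogeneous avoidance).} This is the main obstacle: showing the grid stays in $\mathcal F_{\theta'}$ even though the lattice is pushed into the cusp. I will use the wedge quantity $\Delta_a$, which is invariant under $\SL_2$. Starting from $\mathcal F_\delta$ with $\|a\|\le C_0$, every grid point $b$ satisfies a lower bound $|a\wedge b|\gg_\delta 1$; this follows from the compactness of $\pi^{-1}(\mathcal K_\delta)\cap\mathcal F_\delta$ and the fact that $a\wedge b$ is never zero there, as in \cite{HatefiSimmons2024}.

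After the move, the image $a'$ of $a$ has $\|a'\|\asymp\alpha^{1/2}$. For every grid point $b'$,
\[
\|b'\|\ge\frac{|a'\wedge b'|}{2\|a'\|}\gg_\delta \alpha^{-1/2}\ge 1.
\]
So the grid avoids a neighbourhood of the origin of size depending only on $\delta$. For the intermediate times within the round, I expect to combine this same wedge bound with the compact-family bound on the bounded-time portion.

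If the wedge lower bound fails for some grid, I would modify $z$ within a window of size comparable to $\alpha$. This would preserve both the $q^{-1}\alpha$ approximation of Step~3 and legality, and I would choose the shift so that the line $\{v: a\wedge v=\text{const}\}$ of the grid misses the origin.
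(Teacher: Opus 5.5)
\textbf{There is a genuine gap in Step 4.} Your Steps 1--3 are sound; Step 3 is a cleaner version of the paper's own computation. The problem is that the bound $|a\wedge b|\gg_\delta 1$ over all grid points $b$ is false for the vector $a$ as you chose it, and no perturbation of $z$ repairs it.

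Write $a=g_Tu_{x_0}(p,q)$ and $b=g_Tu_{x_0}(m+\gamma,n)$. Invariance of the wedge gives $a\wedge b=(pn-qm)-q\gamma$, hence $\Delta_a(\Lambda^\gamma)=\normZ{q\gamma}$. This quantity is determined solely by which vector $a$ was picked, and it can vanish on $\pi^{-1}(\mathcal K_\delta)\cap\mathcal F_\delta$. Take $\gamma=\frac12$, $x_0=\frac12$, $\rho=\frac14$, so $e^T=2$. Then:
\begin{itemize}
\item $\Lambda=\{(2m-n,n/2)\}$ has $\lambda_1=1$;
\item the grid $\{(2m+1-n,n/2)\}$ lies in $\mathcal F_{1/2}$, so the hypotheses hold with $\delta=\frac12$;
\item the shortest vector $a=(0,1)$, coming from $(p,q)=(1,2)$, is exactly what Lemma~\ref{lem:lattice-minimum} returns;
\item $\frac12a=(0,\frac12)$ is itself a grid point.
\end{itemize}
After Alice's move the grid contains $\frac12g_su_za=(-\frac12\alpha^{1/2},\frac12\alpha^{1/2})$. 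So it comes within $\alpha^{1/2}/2$ of the origin, and no $\theta'(\delta)$ works. The ``fact'' that $a\wedge b$ never vanishes is not true, so compactness has nothing to act on.

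Your fallback of moving $z$ within an $O(\alpha)$ window also fails, because $\Delta_a$ is invariant under every $g_su_{z'}$. Concretely, $\frac12g_su_{z'}a$ has second coordinate $\frac12\alpha^{1/2}a_2$ for every $z'$. Its first coordinate is $O(\alpha^{1/2})$ whenever $|z'-a_1/a_2|=O(\alpha)$, which is precisely the window Step 3 requires. The obstruction is arithmetic: $\normZ{qx_\infty}\ll q^{-1}\alpha$ with $q$ even forces $\frac q2\normZ{\frac q2x_\infty-\frac12}\ll\alpha$. Such a move therefore works directly against $\Bad^{1/2}$.

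\textbf{The missing idea} is that Alice must choose \emph{which} primitive vector, i.e.\ which denominator $q$, to push into the cusp. The phrase ``the primitive vector given by Lemma~\ref{lem:lattice-minimum}'' has to be read as a suitably selected vector of that type, and the paper's proof makes exactly this selection.
\begin{itemize}
\item Take a basis $\{v,b\}$ of $\Lambda$ with both vectors in the cone $|x|<|y|/2$ and of norm at most $D_\delta$ (e.g.\ $b=w+Nv$).
\item Write the grid as $H=\Lambda+r$ with $r=\sigma_1v+\sigma_2b$ and $|\sigma_1|,|\sigma_2|\le\frac12$. Then $\Delta_v(H)=|\sigma_2|$ and $\Delta_b(H)=|\sigma_1|$.
\item Since $\delta\le\|r\|_\infty\le2D_\delta\max\{|\sigma_1|,|\sigma_2|\}$, one of $v,b$ has wedge separation at least $\kappa_\delta=\delta/(2D_\delta)$. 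In the example this selects an odd $q$.
\end{itemize}
That vector still satisfies the conclusions of Lemma~\ref{lem:lattice-minimum} up to constants, so your Steps 1--3 are unaffected. Moreover $\|g_tu_{z'}a\|_\infty\le2\|a\|_\infty\le2D_\delta$ for $0\le t\le s$. Wedge invariance then gives $\|g_tu_{z'}h\|_\infty\ge\kappa_\delta/(4D_\delta)$ for every grid point $h$ along the whole excursion, which also settles your intermediate-time concern.

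Separately, your reading of $\mathcal K_\theta$ as depending on $\alpha$ is forced. The new lattice contains a vector of length $\alpha^{1/2}|a_2|$, so the homogeneous clause cannot hold with $\theta$ depending on $\delta$ alone.
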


\begin{proof}
Write Bob's current ball as $B(x_0,\rho)$, and put $\rho=e^{-2T}$. The
associated lattice and grid are
\[
\Lambda=g_Tu_{x_0}\Z^2, \qquad
\Lambda^\gamma=g_Tu_{x_0}(\Z^2+(\gamma,0)).
\]
By Lemma~\ref{lem:lattice-minimum}, choose a primitive vector
$a=(a_1,a_2)\in\Lambda$ with
\[
c_0\le|a_2|\le C_0, \qquad \left|\frac{a_1}{a_2}\right|\le\frac12.
\]
Now let Bob announce $\alpha=e^{-2s}$, and choose
$z=\frac{a_1}{a_2}+\alpha$. If $z'=z+\eta\alpha$ with $|\eta|\le1$, then
\[
a_1-z'a_2=-\alpha a_2-\eta\alpha a_2.
\]
Thus, at the level of the homogeneous flow $0\leq t\leq s$,
\[
g_tu_{z'}a=(e^t(a_1-z'a_2),e^{-t}a_2),
\]
and at the relevant cusp-entry time the vector has size $\asymp e^{-s}$.
Since $\rho=e^{-2T}$, the corresponding denominator satisfies
\[
q\asymp e^T=\rho^{-1/2}.
\]
Therefore every final point $x_\infty$ in Alice's ball satisfies
\[
\normZ{qx_\infty}\lessless q^{-1}\alpha.
\]

For the inhomogeneous grid avoidance condition, note that the wedge products
are preserved:
\[
(g_tu_{z'}a)\wedge(g_tu_{z'}b)=a\wedge b,
\]
where $b\in\Lambda^\gamma$ and $0\leq t\leq s$. 
Let
$\mathscr C=\{(x,y):y\neq0,\ |x|<|y|/2\}$. Choose a primitive
$v\in\Lambda\cap\mathscr C$ and let 
$w\in \Lambda$ so that $\{v, w\}$ forms the unimodular basis. For all sufficiently large integers $N$, the vector
$w+Nv$ also belongs to $\mathscr C$, and
$\{v,w+Nv\}$ is again a unimodular basis. Put $b=w+Nv$. Let $H=\Lambda+r\in \pi^{-1}(\mathcal K_\delta)\cap\mathcal F_\delta$ 
Since $H\in\mathcal F_\delta$, the grid does not contain the origin, so
$r\notin\Lambda$. If both $\Delta_v(H)$ and $\Delta_b(H)$ were zero, then,
because $v$ and $b$ are primitive,
\[
v\wedge r\in\mathbb Z,
\qquad
b\wedge r\in\mathbb Z.
\]
Writing $r=sv+tb$ and using $|v\wedge b|=1$, these two relations imply
$s,t\in\mathbb Z$, hence $r\in\Lambda$, a contradiction. Therefore at least
one of $v,b$ has strictly positive wedge separation from the grid. Choose
that vector and call it $a$. 

Every $h\in H$ has the form
$h=(m+s)v+(n+t)b$, with $m,n\in\Z$ and $|s|,|t| \leq 1/2$. Since $v\wedge b=1$,
\[
\Delta_v(H)=|t|,
\qquad
\Delta_b(H)=|s|.
\]
Since both $v$ and $b$ lie in the cone $|x|<|y|/2$, and their norms are
bounded above by a constant $D_\delta$ depending only on $\delta$. 
Moreover $r\in H$ and $H\in\mathcal F_\delta$, so
\[
\delta\le\|r\|_\infty
\le D_\delta(|s|+|t|)
\le2D_\delta\max\{|s|,|t|\}.
\]
Hence
\[
\max\{\Delta_v(H),\Delta_b(H)\}
\ge \frac{\delta}{2D_\delta}.
\]
Choose $a=v$ or $a=b$ according to which of these two quantities is
larger, and set $\kappa_\delta=\delta/(2D_\delta)$.  Since $a$ lies in the
cone and $\Lambda\in\mathcal K_\delta$, one also has
$|a_2|=\|a\|_\infty\ge\delta$.  

On the other hand, the auxiliary formula
gives $\|g_tu_{z'}a\|_\infty\leq 2C_0$ for $0\leq t\leq s$. Using
\[
|v\wedge w|\leq2\|v\|_\infty\|w\|_\infty,
\]
we obtain
\[
\|g_tu_{z'}b\|_\infty \geq \frac{\mathcal K_\delta}{4C_0}:=\theta'(\delta).
\]
Thus, after replacing $\theta'(\delta)$ by a smaller uniform constant if
necessary, we have $
g_tu_{z'}\Lambda^\gamma\in \FF_{\theta'(\delta)}$ 
for every $0\leq t\leq s$ and every $z'\in B(z,e^{-s})$. This proves that
the inhomogeneous grid remains away from the origin throughout the
auxiliary excursion.

For legality, note that Alice waits for $\alpha\lessless\Psi_{\psi}(\rho)$ rather
than merely for $\alpha\lessless1$. Legality follows from
\[
\left|\frac{a_1}{a_2}\right|\leq\frac12, \qquad z=\frac{a_1}{a_2}+e^{-2s}.
\]
For all sufficiently large $s$, $B(z,e^{-2s})\subseteq[-1,1]$. Hence
Alice's auxiliary ball
\[
B(x_0+e^{-2T}z,e^{-2T}e^{-2s})
\]
is contained in Bob's ball $B(x_0,e^{-2T})$.

For restart, the default strategy is applied after the auxiliary cusp-entry
step. Lemma~\ref{lem:default-recovery} returns the homogeneous component
from the cusp corridor $\mathcal K_\theta$ to the bounded region
$\mathcal K_\zeta$, while maintaining the inhomogeneous avoidance condition.
\end{proof}

\begin{lemma}\label{lem:auxiliary-Psi}
There exist constants $a,b,C>0$ such that the following holds. Let Bob's
current radius be $\rho$, put $Q=\rho^{-1/2}$, and suppose Alice performs
the auxiliary move described above with contraction $\alpha$. Then there
exists an integer $q$ satisfying $aQ\leq q\leq bQ$ such that every possible
outcome point $x_\infty$ inside Alice's auxiliary ball satisfies
\[
\normZ{qx_\infty}\leq Cq^{-1}\alpha.
\]
Consequently, if $\alpha\leq C^{-1}\Phi_\psi(Q)$, then
$\normZ{qx_\infty}<\psi(q)$.
\end{lemma}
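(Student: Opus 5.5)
The plan is to make the cusp-entry computation of Lemma~\ref{lem:auxiliary-cusp} explicit, by undoing the change of coordinates between game balls and grid states. Write Bob's ball as $B(x_0,\rho)$ with $\rho=e^{-2T}$, so that $Q=\rho^{-1/2}=e^{T}$, and let $\Lambda=g_Tu_{x_0}\Z^2$. Lemma~\ref{lem:lattice-minimum} gives a primitive $a=(a_1,a_2)\in\Lambda$ with $c_0\le|a_2|\le C_0$ and $|a_1/a_2|\le\tfrac12$. Since $a\in g_Tu_{x_0}\Z^2$, there is $(p,q)\in\Z^2$ with
\[
a=g_T(p-qx_0,\,q)=\bigl(e^{T}(p-qx_0),\,e^{-T}q\bigr).
\]
In particular $q=e^{T}a_2=Qa_2$, and hence $c_0Q\le|q|\le C_0Q$. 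Replacing $(p,q)$ by $(-p,-q)$ if necessary, I may take $q>0$. This fixes $a=c_0$ and $b=C_0$, and these depend only on the compact set $\mathcal K_\delta$ on which the auxiliary move is used.

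Next I evaluate $qx$ along the auxiliary ball. Alice's ball is $B(x_0+e^{-2T}z,\,e^{-2T}\alpha)$ with $z=a_1/a_2+\alpha$, so every outcome point has the form $x_\infty=x_0+e^{-2T}z'$ with $|z'-z|\le\alpha$. Then
\[
p-qx_\infty=(p-qx_0)-qe^{-2T}z'=e^{-T}a_1-e^{-T}a_2z'=e^{-T}(a_1-z'a_2).
\]
As in the proof of Lemma~\ref{lem:auxiliary-cusp}, $a_1-z'a_2=-\alpha a_2-\eta\alpha a_2$ with $|\eta|\le1$, so $|a_1-z'a_2|\le2\alpha|a_2|\le 2C_0\alpha$. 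Therefore
\[
\normZ{qx_\infty}\le|p-qx_\infty|\le 2C_0\alpha e^{-T}=2C_0\alpha Q^{-1}\le 2C_0^2\,\alpha\,q^{-1},
\]
using $Q^{-1}\le C_0/q$. So $C=2C_0^2$ works.

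For the consequence, $aQ\le q\le bQ$ gives $q\psi(q)\ge\Phi_\psi(Q)$ by the definition of $\Phi_\psi$. If $\alpha\le C^{-1}\Phi_\psi(Q)$, then $\normZ{qx_\infty}\le C\alpha q^{-1}\le q^{-1}\Phi_\psi(Q)\le\psi(q)$.

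The main obstacle is getting strict inequality and handling the strong game. For strictness I would shrink $C$ slightly, or use the strict bound $|\eta|<1$ on the interior together with the factor $2$ of slack, replacing $C$ by $2C$. For the strong game, Bob's radius may exceed the model value $e^{-2T}$, and the contraction $\alpha$ and Alice's radius are only bounded below. This changes $T$ by a bounded amount, which only rescales $a$, $b$ and $C$; it is exactly the uniformity noted in the remark of Section~\ref{subsec:grid-states}. These constants must also be matched with the constants $a,b$ fixed in the definition of $\Phi_\psi$, which I would do by choosing $a\le c_0$ and $b\ge C_0$ there, so the infimum only gets smaller.
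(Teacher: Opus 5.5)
Your proposal is correct and is the same argument as the paper's. The paper's proof cites the cusp-entry computation of Lemma~\ref{lem:auxiliary-cusp} ($q=e^{T}a_2\asymp Q$, error of order $q^{-1}\alpha$) and then applies $q\psi(q)\ge\Phi_\psi(Q)$; you make the identity $p-qx_\infty=e^{-T}(a_1-z'a_2)$ explicit, and you also handle two details the paper passes over: replacing $C$ by $2C$ to get the strict inequality (this is the fix that works, since the balls are closed and $|\eta|<1$ is not available), and enlarging $[a,b]$ in the definition of $\Phi_\psi$ so that it contains $[c_0,C_0]$.
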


\begin{proof}
The first assertion is the calculation in Lemma~\ref{lem:auxiliary-cusp}: the
chosen vector $a=g_Tu_{x_0}(p,q)$ has $q\asymp e^T=Q$, and every final
point satisfies $\normZ{qx_\infty}\leq Cq^{-1}\alpha$. If $aQ\leq q\leq bQ$,
then by definition $q\psi(q)\geq\Phi_\psi(Q)$. Thus
\[
Cq^{-1}\alpha
\leq
Cq^{-1}\cdot C^{-1}\Phi_\psi(Q)
\leq
\psi(q).
\]
This proves the lemma.
\end{proof}

\subsection{Completing the Proof}
\label{subsec:completion}

Fix a non-empty initial interval $I_0$ and a sufficiently small $\beta>0$.
Alice combines the default recovery strategy of Lemma~\ref{lem:default-recovery}
with the auxiliary cusp-entry strategy of Lemma~\ref{lem:auxiliary-cusp} and
the approximation estimate of Lemma~\ref{lem:auxiliary-Psi}.

Let $C$ be the constant in Lemma~\ref{lem:auxiliary-Psi}. Choose the
constant $c_*$ in the definition of $\Psi_{\psi}$ so small that
\[
c_*\leq \frac1{100C}.
\]
Alice follows the default strategy at all ordinary stages. At the $j$th
opportunity when Bob announces a contraction satisfying
\[
\frac{\alpha_n}{\Psi_{\psi}(\rho_n)}\leq \frac1{j^2},
\]
Alice performs the auxiliary move. At such a successful auxiliary stage,
Lemma~\ref{lem:auxiliary-Psi} gives a denominator $q_j\asymp\rho_n^{-1/2}$
such that every possible final outcome satisfies
\[
\normZ{q_jx_\infty}
\leq Cq_j^{-1}\alpha_n.
\]
Using the choice of the auxiliary stage,
\[
\alpha_n\leq \frac1{j^2}c_*\Phi_\psi(\rho_n^{-1/2}),
\]
we get
\[
\normZ{q_jx_\infty}
\leq
\frac{Cc_*}{j^2}q_j^{-1}\Phi_\psi(\rho_n^{-1/2})
\leq
\frac1{100j^2}\psi(q_j).
\]
In particular, $\normZ{q_jx_\infty}<\psi(q_j)$.

After the auxiliary move, Lemma~\ref{lem:default-recovery} returns the
homogeneous part to the bounded region $\mathcal K_\zeta$ and keeps the
inhomogeneous grid inside $F_{\zeta'}$. This is the same restart mechanism
as in \cite{HatefiSimmons2024}, except that the approximation itself was
produced by the one-way cusp-entry move.

There are two cases. If Bob provides only finitely many successful
auxiliary opportunities, then for some $j_0$ and all sufficiently large $n$,
\[
\frac{\alpha_n}{\Psi_{\psi}(\rho_n)}>\frac1{j_0^2}.
\]
Therefore
\[
\liminf_{n\to\infty}\frac{\alpha_n}{\Psi_{\psi}(\rho_n)}>0,
\]
so Bob loses by default in the strong $\Psi_{\psi}$-rapid game. Otherwise
Alice performs infinitely many auxiliary moves. The radii tend to zero in
every non-default play, so the denominators $q_j\asymp\rho_n^{-1/2}$ tend
to infinity. For every $j$,
\[
\normZ{q_jx_\infty}<\psi(q_j).
\]
Hence $x_\infty\in\WW(\psi)$. At the same time, the default strategy and
the inhomogeneous avoidance estimate keep the grid inside $\FF_\varepsilon$
for all sufficiently large times. By Lemma~\ref{lem:dani},
$x_\infty\in\Bad^\gamma$. Thus
\[
x_\infty\in\WW(\psi)\cap\Bad^\gamma.
\]
This proves that $\WW(\psi)\cap\Bad^\gamma$ is strong
$\Psi_{\psi}$-rapid winning.

\section{Proof of Theorem \ref{cor:jb}}\label{sec:dimension}
The upper bound is immediate from
$\Bad^\gamma\cap\WW(\tau)\subseteq\WW(\tau)$ and Theorem~\ref{thm:jb}:
\[
\HD(\Bad^\gamma\cap\WW(\tau)) \leq \HD\WW(\tau) = \frac{2}{\tau+1}.
\]

For the lower bound, suppose $\psi(q)=q^{-\tau}$ with $\tau\geq1$. Then
\[
q\psi(q)=q^{1-\tau}.
\]
With $q\asymp\rho^{-1/2}$,
\[
\Phi_\psi(\rho^{-1/2})
\asymp
(\rho^{-1/2})^{1-\tau}
=
\rho^{(\tau-1)/2}.
\]
Thus $\Psi_{\psi}(\rho)\asymp\rho^\omega$ with $\omega=(\tau-1)/2$. By
Theorem~\ref{thm:main} and Corollary~\ref{cor:gauge-comparable}, we have
\[
\HD(\WW(\tau)\cap\Bad^\gamma)
\geq
\frac1{1+\frac{\tau-1}2}
=
\frac{2}{\tau+1}.
\]
Combining the upper and lower bounds gives
\[
\HD(\WW(\tau)\cap\Bad^\gamma)=\frac{2}{\tau+1}.
\]
This completes the proof.

\section{Concluding remarks}

The proof developed in this paper is intrinsically asymmetric: the
homogeneous lattice is used to create the $\psi$-approximation, while the
inhomogeneous grid is kept uniformly away from the origin in order to
guarantee membership in $\Bad^\gamma$. It is therefore natural to ask what
happens when the roles are reversed. Namely, for a fixed
$\gamma_0\in\R/\Z$, one may consider
\[
W^{\gamma_0}(\psi)
:=
\left\{
x\in\R:
\|qx-\gamma_0\|_{\Z}<\psi(q)
\ \text{for infinitely many }q \in\N
\right\}
\]
and ask whether the sets
\[
W^{\gamma_0}(\psi)\cap\Bad
\qquad\text{or, more generally,}\qquad
W^{\gamma_0}(\psi)\cap\bigcap_{j=1}^r\Bad^{\gamma_j}
\]
admit an analogous rapid-winning property. There is an immediate arithmetic obstruction which shows that such a
statement cannot hold without assumptions on the shifts. If, for instance, 
\[
\gamma_j\equiv k\gamma_0\pmod 1
\]
for some integer $k\ge1$ and $q\psi(q)\to0$, then
\[
W^{\gamma_0}(\psi)\cap\Bad^{\gamma_j}=\varnothing.
\]
Indeed, from
$\|qx-\gamma_0\|_{\Z}<\psi(q)$ one obtains
\[
kq\,\|kqx-\gamma_j\|_{\Z}
\le k^2q\psi(q)\to0.
\]
In particular, if $\gamma_0$ is rational and $\psi(q)=q^{-\tau}$ with
$\tau>1$, then
\[
W^{\gamma_0}(\tau)\cap\Bad=\varnothing.
\]
Thus the reversed problem is genuinely sensitive to arithmetic relations
between the shifts.

For suitable shifts, however, it seems plausible that the
$\Psi$-rapid-game approach can be adapted. The main missing ingredient is
a new \emph{affine cusp-entry lemma}: one would need to make a point of the
$\gamma_0$-grid short, thereby producing an inhomogeneous approximation,
while keeping the homogeneous lattice and all the forbidden grids
corresponding to $\gamma_1,\ldots,\gamma_r$ uniformly away from the
origin. The wedge-product argument used in the present paper does not
directly provide this, since the relevant determinants involve quantities
of the form
\[
\|s\gamma_0-q\gamma_j\|_{\Z},
\]
whose uniform lower bounds depend on the arithmetic of the tuple of
shifts.

This leads to several natural problems for future work:
\begin{enumerate}
\item determine sharp arithmetic conditions on
$(\gamma_0,\gamma_1,\ldots,\gamma_r)$ under which
\[
\dim_H\left(
W^{\gamma_0}(\tau)\cap\bigcap_{j=1}^r\Bad^{\gamma_j}
\right)
=
\frac{2}{\tau+1};
\]
\item develop a multi-grid version of the auxiliary and recovery
strategies capable of handling finitely, or possibly countably, many
inhomogeneous bad sets simultaneously;
\item investigate whether analogous rapid-winning statements hold in
higher-dimensional, weighted, or systems-of-linear-forms settings.
\end{enumerate}

These questions suggest that the $\Psi$-rapid game may provide a useful
framework not only for intersections of well and badly approximable sets,
but more generally for Diophantine problems in which approximation and
avoidance conditions occur at different scales.

\end{document}